\numberwithin{equation}{section}
\newtheorem{definition}{Definition}[section]
\newtheorem{theorem}{Theorem}[section]
\newtheorem{corollary}[theorem]{Corollary}
\newtheorem{lemma}[theorem]{Lemma}
\newtheorem{example}[theorem]{Example}
\newtheorem{proposition}[theorem]{Proposition}
\newtheorem{remark}[theorem]{Remark}
\DeclareMathOperator{\J}{\mathrm{Jac}}
\newcommand{\cl}[1]{\overline{#1}}
\newcommand{\R}{\mathbb{R}}
\newcommand{\cL}{\mathcal{L}}
\newcommand{\sign}{\mathop\mathrm{sign}\nolimits}
\newcommand{\ud}{\;\mathrm{d}}
\newcommand{\inter}{\mathrm{int}}
\newcommand{\abs}[1]{\left| #1 \right|}
\newcommand{\norm}[1]{\left\| #1 \right\|}
\newcommand{\scal}[2]{\langle #1 \,,\, #2 \rangle}
\newcommand{\Span}{\mathrm{span}}
\author{Laura Poggiolini}
\author{Marco Spadini}
\address[L.\ Poggiolini, M.\ Spadini]{Dipartimento di Sistemi e Informatica, Universit\`a 
di Firenze, Via Santa Marta 3, 50139 Firenze, Italy}
\title[Local inversion of planar maps with\ldots]{Local inversion of planar maps 
with nice nondifferentiability structure}
\begin{document}

\maketitle

\section{Introduction}

Inspired by invertibility problems for $PC^1$ maps (see e.g.,
\cite{KS94}) that naturally arise in Optimal Control (see e.g.,
\cite{PSp10}) we focus on the invertibility of continuous maps of the
plane which are piecewise linear.

When the plane is pie-sliced in $n\leq 4$ parts (with nonempty
interior and common vertex at the origin) our main result, Theorem
\ref{th.main} below, provides a sufficient condition for any map $L$,
that is continuous and piecewise linear relatively to this slicing, to
be invertible. Some examples show that the assumptions of the theorem
cannot be relaxed too much. In particular, convexity of the slices
cannot be dropped altogether when $n=4$ and, perhaps not
surprisingly, this result cannot be plainly extended to a greater
number of slices.  This result is proved by a combination of linear
algebra and topological arguments in which Theorems 4 and 5 of
\cite{PR96} (Theorems \ref{th.4pr96} and \ref{th.5pr96} below) play a
crucial role.  By contrast, an important tool of nonsmooth analysis,
Clarke's Theorem \cite{Cla83}, does not appear to be adequate for our
purposes in the case $n=4$. We exhibit an explicit example that shows
how this case cannot be treated completely by Clarke's Theorem.

Our results depend on the particularly nice nondifferentiability
structure that we assume throughout. In fact example 2.1 in \cite{KS94}
shows that there exists a $PC^1$ function with $4$ selection functions
(which does not have such structure) which is not locally invertible
at the origin despite being Fr\'echet differentiable at $0$ with
invertible differential.

As stated above, our interest in the invertibility of $PC^1$ maps
stems from optimal control problems. Namely, if one considers a
multiinput optimal control problem which is affine with respect to the
control variable $u\in [-1,1]^m$, then one cannot exclude the
existence of bang--bang Pontryagin extremals. This gives rise to a
$PC^1$ maximized Hamiltonian flow. In order to prove the optimality of
the given Pontryagin extrema via Hamiltonian methods, one needs to
prove the invertibility of the projection of such flow on the state
space (see \cite{AS04} for an introduction to Hamiltonian methods in
control and \cite{ASZ02b,PS04} for specific applications to bang--bang
Pontryagin extremals). In particular, as in \cite{PSp08,PSp10} we are
interested in what happens when two control components switch
simultaneously just once. In this case the ``interesting'' part of the
above-mentioned projection is $2$-dimensional. This justifies our
interest into the invertibility of planar maps. Moreover, a double
switch gives rise to the ``nice'' nondifferentiability structure we
consider in this paper with at most $n=5$ pie-slices which reduce to
$4$ for the subsequent simple switches.

To the best of our knowledge, a comprehensive treatment of
invertibility results in simple cases is not available in the
literature. This has, perhaps, slowed down the study of bang--bang
Pontryagin extremals with multiple switch behavior.

Some comments are in order concerning some of the illustrations
included in this paper.  Figures \ref{fig.ninv5p}, \ref{fig.ninv4p}
and \ref{fig.noclarke4p} represent the piecewise linear maps contained
in Examples \ref{ex.5pezzi}, \ref{ex.ninv4p} and \ref{ex.5pezzicl},
respectively. In fact, they actually show the image of the unit circle
$S^1$ under these maps. But, for the sake of clarity, we have altered
the proportion between axes and, in order to enhance the view close to
the origin, we logarithmically rescaled the radial distance from the
origin. Notice that such transformations do not change the qualitative
behavior of the maps (at least not the characteristics we are
interested in).

\section{Preliminaries and notation}

\subsection{Some notions of nonsmooth analysis}
Following \cite{KS94}, a continuous function $f\colon
U\subseteq\R^s\to\R^m$ is a \emph{continuous selection of $C^1$
  functions} if there exists a finite number of $C^1$ functions
$f_1,\ldots,f_\ell$, of $U$ into $\R^m$ such that the \emph{active
  index set} $\mathcal{I}:=\{i:f(x)=f_i(x)\}$ is nonempty for each
$x\in U$. The functions $f_i$'s are called \emph{selection functions}
of $f$. The function $f$ is called a \emph{$PC^1$ function} if at
every point $x\in U$ there exists a neighborhood $V$ such that the
restriction of $f$ to $V$ is a continuous selection of $C^1$
functions.

A function $f\colon\R^s\to\R^m$ is said to be \emph{piecewise linear}
if it is a continuous selection of linear functions. We will actually
focus on a much more restrictive class of piecewise linear functions
namely in the case $m=s=2$.

\begin{definition}
  A cone with nonempty interior $C$ and vertex at the origin of $\R^k$
  is called a \emph{polyhedral cone} if it is the intersection of a
  finite number of half-spaces.
\end{definition}

\begin{definition}\label{def.pwlin}
  We say that a continuous map $G:\R^k\to\R^k$ is \emph{strongly
    piecewise linear (at $0$)} if there exist a decomposition
  $C_1,\ldots,C_n$ of $\R^k$ in closed polyhedral cones with nonempty
  interior and common vertex at the origin, and linear maps
  $L_1,\ldots,L_n$ with
  \[
  G(x) = L_i x, \qquad x \in C_i.
  \]
  We also say that $G$ is \emph{nondegenerate} if $\sign(\det L_i) $
  is constant and nonzero for all $i=1,\ldots,n$.
\end{definition}

Notice that if $G$ is a continuous strongly piecewise linear map as in
Definition \ref{def.pwlin} above, then $L_i x = L_j x$ for any $x \in
C_i \cap C_j$ and $i,j\in\{1,\ldots,n\}$. Moreover, $G$ is positively
homogeneous.

In this paper we are concerned with the global invertibility of
continuous nondegenerate strongly piecewise linear maps. In this
regard the following simple observation is in order:
\begin{lemma}\label{lem.locglob}
  Let $G:\R^k\to\R^k$ be a continuous strongly piecewise linear map as
  in Definition \ref{def.pwlin}, and let $U$ be an open neighborhood
  of $0\in\R^k$. Assume that the restriction $G|_U:U\to G(U)$ is
  invertible with continuous inverse, then $G$ is globally invertible
  and its inverse is a continuous strongly piecewise linear map as
  well.
\end{lemma}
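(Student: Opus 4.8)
The plan is to leverage the positive homogeneity of $G$ noted right after Definition \ref{def.pwlin}: since $G$ agrees with the linear map $L_i$ on $C_i$, one has $G(tx)=tG(x)$ for every $x\in\R^k$ and every $t\ge 0$. Homogeneity is the device that transfers the purely local hypothesis on $U$ to the whole of $\R^k$, by rescaling a given configuration into $U$ and then scaling the outcome back.

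First I would establish global injectivity. Suppose $G(x)=G(y)$; fix $r>0$ with the ball of radius $r$ about $0$ contained in $U$, and take $t>0$ so small that both $tx$ and $ty$ lie in that ball. Then $G(tx)=tG(x)=tG(y)=G(ty)$, and injectivity of $G|_U$ gives $tx=ty$, hence $x=y$. A by-product, needed later: restricting $G$ to a small ball inside $\inter C_i$ (nonempty by assumption) shows that each $L_i$ is an injective linear endomorphism of $\R^k$, hence a linear isomorphism.

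Next comes surjectivity, which I expect to be the one step that genuinely requires a topological input. Since $G|_U$ is a continuous injection on the open set $U$, invariance of domain makes $G(U)$ an open neighbourhood of $G(0)=0$; fix $\rho>0$ with the ball of radius $\rho$ inside $G(U)$. Given $y\in\R^k$, choose $s>0$ with $sy$ in that ball, write $sy=G(z)$ with $z\in U$, and obtain $y=s^{-1}G(z)=G(s^{-1}z)$ by homogeneity. (Alternatively: $G$ is proper, since by homogeneity and injectivity no unit vector is mapped to $0$, whence $\norm{G(x)}\to\infty$ as $\norm{x}\to\infty$; then $G(\R^k)$ is closed as well as open, and connectedness of $\R^k$ forces $G(\R^k)=\R^k$.) Thus $G$ is a continuous bijection of $\R^k$; invariance of domain moreover makes $G$ an open map, so $G$ is a homeomorphism and $G^{-1}$ is continuous.

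It remains to recover the strongly piecewise linear structure of $G^{-1}$. Set $D_i:=L_i(C_i)$ and $M_i:=L_i^{-1}$. Each $D_i$ is the image under a linear isomorphism of a closed polyhedral cone with nonempty interior and vertex at the origin, hence is again such a cone; the $D_i$ cover $\R^k$ because $\bigcup_i D_i=\bigcup_i G(C_i)=G(\R^k)=\R^k$; and their interiors are pairwise disjoint, since $G^{-1}$ maps $\inter D_i\cap\inter D_j$ ($i\ne j$) into $C_i\cap C_j\subseteq\partial C_i$, and a homeomorphism cannot send a nonempty open set into a set with empty interior. Finally $G^{-1}(y)=M_iy$ on $D_i$: if $y\in D_i$ then $y=L_ix=G(x)$ for some $x\in C_i$, so $G^{-1}(y)=x=M_iy$; and this is unambiguous on $D_i\cap D_j$, because the unique $G$-preimage $x$ of such a $y$ lies in $C_i\cap C_j$, where $L_ix=L_jx$ and hence $M_iy=x=M_jy$. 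Therefore $G^{-1}$ is a continuous strongly piecewise linear map, associated with the decomposition $D_1,\dots,D_n$ and the linear maps $M_1,\dots,M_n$. The only non-elementary ingredient used anywhere is invariance of domain, invoked to pass from local to global surjectivity and to get openness of $G$; the rest is the homogeneity rescaling trick together with routine linear algebra.
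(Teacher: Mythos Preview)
Your proof is correct and follows the same core strategy as the paper: exploit positive homogeneity of $G$ to rescale any configuration into $U$, use the local hypothesis there, and scale back. The difference lies chiefly in the surjectivity step. The paper writes down an explicit global inverse
\[
H(y):=\frac{\|y\|}{\rho}\,(G|_U)^{-1}\!\left(\rho\,\frac{y}{\|y\|}\right)
\]
and verifies $G\circ H=\mathrm{id}$ directly; this single formula simultaneously gives continuity and (implicitly) the piecewise linear structure of $G^{-1}$. You instead invoke invariance of domain to see that $G(U)$ is an open neighbourhood of $0$ and then rescale. Your route is in fact a bit more careful here: the paper's formula tacitly requires $\rho\,y/\|y\|\in G(U)$, and invariance of domain is precisely what guarantees this for a suitable $\rho$. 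You also spell out explicitly the polyhedral decomposition $D_i=L_i(C_i)$ and the selection maps $M_i=L_i^{-1}$ for $G^{-1}$, which the paper leaves implicit in the formula for $H$. So the two arguments are essentially equivalent, with yours trading an explicit inverse formula for a cleaner justification via invariance of domain and a more detailed verification of the strongly piecewise linear structure of $G^{-1}$.
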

\begin{proof}
  Let us first prove that $G$ is injective. Let $x_1,x_2\in\R^k$ be
  such that $G(x_1)=G(x_2)$. Let $\rho>0$ be such that the sphere
  $S_\rho$ of radius $\rho$ and centered at the origin is contained in
  $U$. Then
  \[
  G\left(\rho\frac{x_1}{\|x_1\|}\right) =
  G\left(\rho\frac{x_2}{\|x_2\|}\right).
  \]
  Since, for $i=1,2$, $ \rho x_i/\|x_i\|\in U$, we get $x_1=x_2$.

  Let us now prove surjectivity by explicitly exhibiting the
  inverse. This will take care of the continuity too. Given
  $y\in\R^k$, define $H(y)$ as follows:
  \[
  H(y):= \frac{\|y\|}{\rho}
  (G|_U)^{-1}\left(\rho\frac{y}{\|y\|}\right)
  \]
  where $\rho$ is as above. Clearly the above definition does not
  depend on the choice of $\rho$. The fact that $G\big( H(y)\big)=y$
  for any $y\in\R^k$ is a straightforward computation.
\end{proof}

In this paper, we study the invertibility of continuous strongly
piecewise linear maps. We will prove later (Proposition \ref{prop.inv}
below) that, if such a map is invertible, then it is necessarily
nondegenerate. It is not difficult to see that the converse of this
statement is not true (see for instance Examples \ref{ex.5pezzi} and
\ref{ex.ninv4p} below). Our main concern will be finding simple
sufficient conditions for the invertibility. Section \ref{sez.main} is
devoted to this purpose. Before dealing with this problem, however, we
need some preliminaries.

\smallskip A classical notion which we need is that of Bouligand
derivative. Let $U\subseteq\R^s$ be open and let $f\colon U\to\R^m$ be
locally Lipschitz. We say that $f$ is \emph{Bouligand differentiable}
at $x_0\in U$ if there exists a positively homogeneous function,
$f'(x_0,\cdot)\colon\R^s\to\R^m$ with the property that
\begin{equation}\label{defB}
  \lim_{x\to x_0}\frac{\|f(x)-f(x_0)-f'(x_0,x-x_0)\|}{\|x-x_0\|} = 0.
\end{equation}
This uniquely determined function $f'(x_0,\cdot)$ is called the
\emph{Bouligand derivative} of $f$ at $x_0$.
An important fact proved by Kuntz/Scholtes \cite{KS94} is the
following:
\begin{proposition}[Prop.\ 2.1 in \cite{KS94}]
  Let $U\subseteq\R^s$ be an open set. A $PC^1$ function $f\colon
  U\to\R^m$ is locally Lipschitz and, at every $x_0\in U$, has a
  piecewise linear Bouligand derivative $f'(x_0,\cdot)$ which is a
  continuous selection of the Fr\'echet derivatives of the selection
  functions of $f$ at $x_0$.
\end{proposition}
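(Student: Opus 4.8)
Fix $x_0\in U$ and, using the $PC^1$ property, choose a closed ball $V\subseteq U$ centered at $x_0$ on which $f$ is a continuous selection of $C^1$ functions $f_1,\dots,f_\ell$; set $A_i:=\{x\in V:f(x)=f_i(x)\}$, so the $A_i$ are closed and cover $V$, and shrink $V$ so that each $f_i$ is Lipschitz on $V$ with constant $L_i$. Put $L:=\max_iL_i$. For $x,y\in V$, consider $h(s):=f\big((1-s)x+sy\big)$ for $s\in[0,1]$: on each closed set $T_i:=\{s:(1-s)x+sy\in A_i\}$ the map $h$ agrees with $s\mapsto f_i\big((1-s)x+sy\big)$, which is Lipschitz in $s$ with constant $\Lambda:=L\norm{y-x}$. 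A continuous function on $[0,1]$ that is $\Lambda$-Lipschitz on each set of a finite closed cover is globally $\Lambda$-Lipschitz: if $\sigma:=\sup\{s\in[0,1]:\norm{h(s)-h(0)}\le\Lambda s\}$ were $<1$, then by finiteness of the cover some $T_i$ would accumulate at $\sigma$ from the right, and combining its Lipschitz bound with $\norm{h(\sigma)-h(0)}\le\Lambda\sigma$ would produce points of the set beyond $\sigma$, a contradiction. Hence $\sigma=1$ and $\norm{f(y)-f(x)}=\norm{h(1)-h(0)}\le L\norm{y-x}$, proving local Lipschitz continuity.

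Next I would produce the one-sided directional derivatives. Fix $d\in\R^s$ with $\norm{d}=1$. By continuity, any index $i$ with $f_i(x_0)\neq f(x_0)$ is inactive near $x_0$, so after one more shrinking we may assume that $x_0+td\in A_i$ with $0<t$ small forces $f_i(x_0)=f(x_0)$. Writing $a_i:=Df_i(x_0)\,d$ and using $C^1$-smoothness of the finitely many $f_i$ at $x_0$, for such small $t$ with $x_0+td\in A_i$ we get
\[
  \frac{f(x_0+td)-f(x_0)}{t}=\frac{f_i(x_0+td)-f_i(x_0)}{t}=a_i+o(1),
\]
with the $o(1)$ uniform in $i$; hence every cluster point of the left-hand side as $t\downarrow0$ lies in the finite set $\{a_i:f_i(x_0)=f(x_0)\}$. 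To see that the limit exists, argue one coordinate at a time: if the $\liminf$ and $\limsup$ of a coordinate of the quotient differed, pick $\gamma$ strictly between them and outside that finite set; then the continuous scalar function $t\mapsto\phi(t)-\gamma t$, where $\phi$ is the chosen coordinate of $f(x_0+td)-f(x_0)$, changes sign along sequences tending to $0$, so by the intermediate value theorem it vanishes along some $u_n\downarrow0$, forcing that coordinate of the quotient to equal $\gamma\notin\{a_i\}$ — impossible. Thus $g(d):=\lim_{t\downarrow0}\big(f(x_0+td)-f(x_0)\big)/t$ exists for every $d\in\R^s$, equals $Df_i(x_0)\,d$ for some $i$ with $f_i(x_0)=f(x_0)$, and is positively homogeneous of degree $1$ by construction.

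It remains to upgrade this to Bouligand differentiability. For $d$ in the unit sphere $S$ of $\R^s$ and $t>0$ small, the difference quotients $q_t(d):=\big(f(x_0+td)-f(x_0)\big)/t$ are $L$-Lipschitz in $d$ by the first paragraph and converge pointwise to $g$; by the Arzel\`a--Ascoli theorem the convergence is uniform on the compact set $S$, and, since $g$ is positively homogeneous, this uniform convergence is precisely \eqref{defB} with $f'(x_0,\cdot)=g$. In particular $g$ is continuous on $S$, hence, being positively homogeneous, continuous on all of $\R^s$. Finally, since $g(d)\in\{Df_i(x_0)\,d:1\le i\le\ell\}$ for every $d$, the linear maps $d\mapsto Df_i(x_0)\,d$ serve as selection functions for the continuous map $f'(x_0,\cdot)$; hence $f'(x_0,\cdot)$ is piecewise linear and is a continuous selection of the Fr\'echet derivatives of the $f_i$ at $x_0$, as claimed.

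I expect the second paragraph — the existence of the directional derivative — to be the main obstacle: the active index attached to $x_0+td$ may oscillate as $t\downarrow0$, and the intermediate-value argument is precisely what prevents that oscillation from producing two genuinely different directional slopes. The Lipschitz step and the Arzel\`a--Ascoli step are routine once the right quantities are in place.
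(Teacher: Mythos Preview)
The paper does not prove this proposition at all: it is quoted verbatim as Proposition~2.1 of Kuntz--Scholtes \cite{KS94} and used as background, with no argument supplied. There is therefore no ``paper's own proof'' to compare your attempt against.

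That said, your proposed proof is sound as a self-contained argument. The supremum trick on the segment gives $\norm{h(1)-h(0)}\le\Lambda$, which is exactly the inequality $\norm{f(y)-f(x)}\le L\norm{y-x}$ you need; the global $\Lambda$-Lipschitz claim on $[0,1]$ is stronger than necessary but follows by the same argument applied to subintervals. In the second paragraph, the coordinatewise intermediate-value argument is correct once you observe (implicitly via the Lipschitz bound and Bolzano--Weierstrass) that every cluster point of a single coordinate of the quotient must be the corresponding coordinate of some $a_i$. The Arzel\`a--Ascoli step is routine: the family $\{q_t\}_{0<t<\delta}$ is equi-Lipschitz and bounded on the unit sphere, and pointwise convergence upgrades to uniform convergence, which is precisely \eqref{defB}. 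One cosmetic point: your definition of ``piecewise linear'' in the conclusion matches the paper's (continuous selection of linear maps), so the final sentence closes the loop correctly.
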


Following \cite{PR96} we consider a generalization of the notion of
Jacobian matrix $\nabla f(x)$ of a function $f\colon\R^k\to\R^k$ at a
Fr\'echet differentiability point $x$. Let $f\colon\R^k\to\R^k$ be
locally Lipschitz at $x_0$. We define $\J (f,x)$ as the (nonempty) set
of limit points of sequences $\{\nabla f(x_k)\}$ where $\{x_k\}$ is a
sequence converging to $x_0$ and such that $f$ is Fr\'echet
differentiable at $x_k$ with Jacobian $\nabla f(x_k)$. One can see
(\cite{PR96}), as a consequence of Rademacher's Theorem that
$\J(f,x_0)$ is nonempty. Moreover the convex hull of $\J(f,x_0)$ is
equal to the Clarke generalized Jacobian of $f$ at $x$.

Let $f\colon U\subseteq\R^k\to\R^k$ be a $PC^1$ function (with
selection functions $f_i$).  The relation between the Bouligand
derivative and the above generalized notion of Jacobian is clarified
by the following formula \cite[Lemma 2]{PR96}:
\begin{equation}\label{eq.genjac}
  \J\big(f'(x_0,\cdot),0\big)\subseteq\J(f,x_0)
  =\big\{\nabla f_i(x_0): i\in\bar{\mathcal{I}}(x_0)\big\},
\end{equation}
where $\bar{\mathcal{I}}(x_0) =\big\{ i:x_0\in \mathrm{cl~int}\{x\in
U:i\in\mathcal{I}(x)\} \big\}$, see e.g.\ \cite{KS94}.

The following two results of \cite{PR96} play a crucial role in the
following. Here, we slightly reformulate them to match our notation.

\begin{theorem}[Thm.\ 4 of \cite{PR96}]\label{th.4pr96}
  Let $f\colon U\subseteq\R^k\to\R^k$ be a $PC^1$ function. Then $f$
  is a local Lipschitz homeomorphism at $x_0\in U$ if and only if
  $\J(f,x_0)$ consists of matrices whose determinants have the same
  nonzero sign and, for a sufficiently small neighbourhood $U_0$ of
  $x_0$, $\deg(f,U_0,0)$ is well-defined and has value $\pm 1$.
\end{theorem}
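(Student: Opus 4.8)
The plan is to prove the two implications after reducing both of them to a statement about the Bouligand derivative $h:=f'(x_0,\cdot)$. Translate so that $f(x_0)=0$. By the Kuntz/Scholtes proposition recalled above, $h$ is piecewise linear and positively homogeneous, and its linear pieces are precisely the matrices in $\J(f,x_0)$ (by \eqref{eq.genjac}). Two preliminary facts carry the reduction. First, the nonsmooth inverse function theorem of \cite{KS94}: $f$ is a local Lipschitz homeomorphism at $x_0$ if and only if $h$ is a homeomorphism of $\R^k$; the substance here is a contraction-mapping argument for the implication ``$h$ homeomorphism $\Rightarrow$ $f$ locally invertible'', using that $h^{-1}$, being piecewise linear and bijective, is globally Lipschitz. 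Second, a blow-up comparison of degrees: since $f(x_0+v)=h(v)+o(\norm{v})$ and $h$ is positively homogeneous, the rescaled maps $v\mapsto t^{-1}f(x_0+tv)$ converge to $h$ uniformly on the closed unit ball as $t\to0^{+}$, so by homotopy invariance of the Brouwer degree $\deg(f,x_0+tB,0)=\deg(h,B,0)$ for all sufficiently small $t>0$, where $B$ denotes the open unit ball of $\R^k$; both sides are defined whenever $0$ is isolated in $h^{-1}(0)$. It therefore suffices to prove: \emph{$h$ is a homeomorphism if and only if the linear pieces of $h$ have nonzero determinants all of one sign and $\deg(h,B,0)=\pm1$.}

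\emph{Necessity.} Suppose $h$ is a homeomorphism. Its local degree at $0$ is $\pm1$ by elementary degree theory. Each linear piece $L_i$ must be nonsingular, for otherwise $h(\inter C_i)=L_i(\inter C_i)$ would be a nonempty open set contained in the proper subspace $L_i(\R^k)$, against invariance of domain. For the sign: whenever two cones $C_i,C_j$ share a facet spanning a hyperplane $\{\nu^{T}x=0\}$, the equality $L_i=L_j$ on that hyperplane forces $L_j=L_i(I+L_i^{-1}c\,\nu^{T})$ for some $c\in\R^k$, whence $\det L_j=\det L_i\,(1+\nu^{T}L_i^{-1}c)$; a short computation then shows that the two-piece map equal to $L_i$ on one side of the hyperplane and to $L_j$ on the other carries the two closed half-spaces onto complementary half-spaces — meeting only in the common image of the facet — exactly when $1+\nu^{T}L_i^{-1}c>0$, and onto one and the same half-space (a fold, hence non-injective) otherwise. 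As $h$ is injective the first case holds across every facet, so $\det L_i$ and $\det L_j$ have equal sign; since the facet-adjacency graph of the cone decomposition is connected, all the signs coincide.

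\emph{Sufficiency.} Suppose the pieces of $h$ are nonsingular with determinants of a common sign. Then $h^{-1}(0)=\{0\}$ (a zero in the interior of some $C_i$ would make $L_i$ singular, and a zero on a boundary propagates to an adjacent piece), so by positive homogeneity $\norm{h(x)}\ge c\norm{x}$ for some $c>0$, i.e.\ $h$ is proper; moreover the fibers of $h$ are finite (at most one point per piece). The heart of the matter is that coherent orientation forces $h$ to be an \emph{open} map. Granting this, $h$ is an open proper map of $\R^k$ onto itself with finite fibers, hence a branched covering with a well-defined degree $d\ge1$, and since $0$ is isolated in $h^{-1}(0)$ excision gives $d=\deg(h,B,0)$. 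The hypothesis $\deg(h,B,0)=\pm1$ then yields $d=1$, which forces the covering to be unbranched: openness makes every local degree of $h$ strictly positive, so a value with two distinct preimages would push the global degree to at least $2$. Hence $h$ is injective, thus a homeomorphism, and by the nonsmooth inverse function theorem $f$ is a local Lipschitz homeomorphism at $x_0$.

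The crux, and the step I expect to be the main obstacle, is showing that coherent orientation alone makes $h$ open, together with the finer point that the unit-degree hypothesis then forces injectivity. Away from the common vertex this reduces, via the facet computation above and an induction on dimension, to the same assertion in lower dimensions; at the vertex, coherent orientation genuinely is not enough — a piecewise linear analogue of $z\mapsto z^{2}$ on $\R^{2}$ is coherently oriented with covering degree $2$ — so the degree hypothesis does essential work there. In the planar case $k=2$ relevant to this paper, the vertex analysis becomes completely explicit: by coherent orientation the images of the angular sectors are cones that wind counterclockwise and close up after a total turning of $360^{\circ}d$, whence $h$ is automatically open and surjective and is injective exactly when $d=1$. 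A lesser technical point is to check, in the blow-up comparison of the first paragraph, that $0$ stays off the images of $\partial B$ throughout the homotopy, so that each degree in sight is well-defined.
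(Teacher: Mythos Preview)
The paper does not prove this statement: Theorem~\ref{th.4pr96} is quoted verbatim from Pang--Ralph \cite{PR96} as an external input, with no argument supplied. So there is no in-paper proof to compare your attempt against.

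That said, your reduction contains a genuine gap. You assert, as ``the nonsmooth inverse function theorem of \cite{KS94}'', that $f$ is a local Lipschitz homeomorphism at $x_0$ if and only if $h:=f'(x_0,\cdot)$ is a homeomorphism of $\R^k$. But the paper's own introduction recalls Example~2.1 of \cite{KS94}: a $PC^1$ map that is Fr\'echet differentiable at the origin with invertible differential---so $h$ is a linear isomorphism, hence a homeomorphism---yet is \emph{not} locally invertible there. This kills the implication ``$h$ homeomorphism $\Rightarrow$ $f$ local homeomorphism'' in general, and it is precisely why Theorem~\ref{th.5pr96} carries the extra hypothesis $\J(f,x_0)=\J\big(f'(x_0,\cdot),0\big)$. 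The same example exposes a second slip: you claim the linear pieces of $h$ are ``precisely the matrices in $\J(f,x_0)$'', but \eqref{eq.genjac} only gives the inclusion $\J(h,0)\subseteq\J(f,x_0)$, and in that example $\J(h,0)$ is a singleton while $\J(f,x_0)$ is strictly larger.

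Both directions of your argument are affected. For sufficiency, even granting your (nice) argument that coherent orientation of the pieces of $h$ together with $\deg(h,B,0)=\pm1$ forces $h$ to be a homeomorphism, you cannot then conclude that $f$ is locally invertible without bringing in the full $\J(f,x_0)$, not just $\J(h,0)$. For necessity, showing that the pieces of $h$ share a sign only controls $\J(h,0)$; you still owe the stronger conclusion that every matrix in $\J(f,x_0)$---including Jacobians of selection functions that may be ``inactive'' for $h$---has that same sign. Any correct proof must work directly with $f$ and its selection structure (as \cite{PR96} does), or else supply an additional argument that, under the sign hypothesis on $\J(f,x_0)$, invertibility of $h$ does lift to $f$.
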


\begin{theorem}[Thm.\ 5 of \cite{PR96}]\label{th.5pr96}
  Let $f\colon U\subseteq\R^k\to\R^k$ be a $PC^1$ function, and let
  $x_0\in U$. Assume that
  \[
  \J(f,x_0) = J\big(f'(x_0,\cdot),0\big),
  \]
  then the following statements are equivalent:
  \begin{enumerate}
  \item $f$ is a local Lipschitz homeomorphism at $x_0\in U$;
  \item $f'(x_0,\cdot)$ is bijective;
  \item $f'(x_0,\cdot)$ is a (global) Lipschitz homeomorphism.
  \end{enumerate}
  Moreover, if any of (i)--(iii) holds, then $f$ is a local $PC^1$
  homeomorphism at $x_0$.
\end{theorem}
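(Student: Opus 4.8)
The plan is to reduce the whole statement to Theorem \ref{th.4pr96}, applied both to $f$ at $x_0$ and to the Bouligand derivative $g:=f'(x_0,\cdot)$ at $0$. Write $J_i:=\nabla f_i(x_0)$ for the Fr\'echet derivatives at $x_0$ of the selection functions of $f$ that are active near $x_0$. First I would record the elementary facts: by Proposition 2.1 of \cite{KS94}, $g$ is continuous, piecewise linear (hence $PC^1$ and globally Lipschitz), positively homogeneous, and a continuous selection of the $J_i$'s; moreover a positively homogeneous piecewise linear map is its own Bouligand derivative at the origin, so $\J(g,0)=\J\big(g'(0,\cdot),0\big)$ trivially, and the value $\J(g,0)$ is, by \eqref{eq.genjac} and the hypothesis of the theorem, exactly $\J(f,x_0)$. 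Also, $0$ is the unique zero of $g$ as soon as all the $J_i$ are invertible, since $g(v)=J_iv$ on the cone $C_i$ containing $v$.

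Next I would dispose of the cheap implications. The implication (iii)$\Rightarrow$(ii) is trivial. For (ii)$\Rightarrow$(iii): if $g$ is bijective then, for each $i$, $g$ restricted to the interior of the polyhedral cone $C_i$ is an injective linear map whose image is open by invariance of domain, so $J_i$ is invertible; then $g$ is a homeomorphism of $\R^k$ onto itself (invariance of domain once more), and since $g^{-1}$ coincides with the linear map $J_i^{-1}$ on the cone $J_iC_i$, it is piecewise linear, hence Lipschitz. In particular (ii) implies that $g$ is a local Lipschitz homeomorphism at $0$.

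The heart of the argument is the degree identity
\[
\deg\big(f,\,x_0+B_\delta,\,f(x_0)\big)=\deg\big(g,\,B_\delta,\,0\big)
\qquad\text{for all sufficiently small }\delta>0,
\]
which I would establish under the sole hypothesis that every $J_i$ is invertible. In that case $0$ is the only zero of $g$, so $m:=\min_{\|u\|=1}\|g(u)\|>0$, and for $\|v\|=\delta$ one has $\|g(v)\|\ge m\delta$ by homogeneity while $\|f(x_0+v)-f(x_0)-g(v)\|=o(\|v\|)$ by \eqref{defB}; hence for $\delta$ small the straight-line homotopy from $v\mapsto f(x_0+v)-f(x_0)$ to $g$ never vanishes on the sphere $\|v\|=\delta$, and the two degrees coincide. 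With this identity in hand the remaining two implications are quick. For (i)$\Rightarrow$(ii): by Theorem \ref{th.4pr96} applied to $f$, the set $\J(f,x_0)=\J(g,0)$ consists of matrices with determinants of one and the same nonzero sign, so all $J_i$ are invertible, and moreover $\deg\big(f,x_0+B_\delta,f(x_0)\big)=\pm1$; the identity forces $\deg(g,B_\delta,0)=\pm1$, so Theorem \ref{th.4pr96} applied to $g$ at $0$ makes $g$ a local Lipschitz homeomorphism at $0$, which by positive homogeneity (rescale any two points into the neighbourhood) is globally bijective. For (ii)$\Rightarrow$(i): by the (ii)$\Rightarrow$(iii) step $g$ is a homeomorphism of $\R^k$ and each $J_i$ is invertible, so $g$ has a well-defined topological degree $d(g)\in\{\pm1\}$, and at every differentiability point $p\in\inter C_i$ its local index equals both $\sign\det J_i$ and $d(g)$; hence $\J(f,x_0)=\J(g,0)$ has the required constant nonzero sign, while $\deg(g,B_\delta,0)=d(g)=\pm1$ and the identity give $\deg\big(f,x_0+B_\delta,f(x_0)\big)=\pm1$, so Theorem \ref{th.4pr96} applied to $f$ yields (i). Finally, once (i) holds all active $J_i$ are invertible, so each active selection function $f_i$ is a $C^1$ diffeomorphism near $x_0$ by the inverse function theorem, and a short verification identifies the continuous local inverse of $f$ near $f(x_0)$ with a continuous selection of the $C^1$ maps $f_i^{-1}$, i.e.\ a $PC^1$ map.

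The step I expect to be the main obstacle is the careful bookkeeping around the degree identity: one has to extract invertibility of all the $J_i$ from the hypotheses \emph{before} either $\deg(f,\cdot,f(x_0))$ or $\deg(g,\cdot,0)$ is known to be well-defined, so the order in which the implications are proved (never presupposing what is being proved) is essential; and the passage ``local index $=$ global degree'' for the homeomorphism $g$ in the step (ii)$\Rightarrow$(i), though classical, should be justified via the one-point compactification $S^k\to S^k$.
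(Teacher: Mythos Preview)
The paper does not prove this theorem; it is quoted (with minor reformulation) from Pang--Ralph \cite{PR96} and used as a black box, so there is no ``paper's own proof'' against which to compare your proposal. That said, your sketch is a faithful reconstruction of the standard argument in \cite{PR96}: the reduction of all three equivalences to Theorem~\ref{th.4pr96} via the straight-line homotopy and the degree identity between $f(x_0+\cdot)-f(x_0)$ and $g=f'(x_0,\cdot)$ is exactly the mechanism used there, and your ordering of the implications (extracting invertibility of the $J_i$ before invoking the degree) is correct.
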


We conclude this subsection recalling the classical notion of
Bouligand tangent cone. Let $C\subseteq\R^k$ be a nonempty closed
subset. Given $x\in C$, the \emph{Bouligand tangent cone to $C$ at
  $x$} is the set:
\[
\big\{ v\in\R^k:\exists\alpha_j\to 0^+,\,\exists v_j\to v\, \textrm{
  s.t.\ } x+\alpha_jv_j\in C\big\}.
\]

\subsection{Topological degree}

In this section we briefly recall the notion of Brouwer degree of a
map and summarize some of its properties that will be used in the rest
of the paper. Major references for this topic are, for instance,
Milnor \cite{Mil65}, Deimling \cite{De95} and Lloyd \cite{Llo}; see
also \cite{BFPS03} for a quick introduction.

A triple $(f,U,p)$, with $p\in\R^k$ and $f$ a proper map defined in
some neighbourhood of the open set $U\subseteq\R^k$, is said to be
\emph{admissible} if $f^{-1}(p)\cap U$ is compact. Given an admissible
triple $(f,U,p)$, it is defined an integer $\deg(f,U,p)$, called the
\emph{degree of $f$ in $U$ respect to $p$}, that in some sense counts
(algebraically) the elements of $f^{-1}(p)$ which lie in $U$. In fact,
when in addition to the admissibility of $(f,U,p)$ we let $f$ be $C^1$
in a neighbourhood of $f^{-1}(p)\cap U$ and assume $p$ is a regular
value of $f$, the set $f^{-1}(p)\cap U$ is finite, and one has
\begin{equation}\label{defgrado}
  \deg(f,U,p)=\sum_{x\in f^{-1}(p)\cap U}\sign\det\big(f'(x)\big),
\end{equation}
where $f'(x)$ denotes the (Fr\'echet) derivative of $f$ at $x$. See
e.g.\ \cite{Mil65} for a broader definition in the case when $(f,U,p)$
is just an admissible triple.

The Brouwer degree enjoys many known properties only a few of which
are needed in this paper. We now remind some of them.

%
%
%
%

\noindent
\emph{(Excision.)} If $(f,U,y)$ is admissible and $V$ is an open
subset of $U$ such that \mbox{$f^{-1}(y) \cap U \subseteq V$}, then
$(f,V,y)$ is admissible and
\[
\deg(f,U,y) = \deg(f,V,y).
\]

\noindent
\emph{(Boundary Dependence.)} Let $U \subseteq \R^k$ be open, and let
$f$ and $g$ be $\R^k$-valued functions defined in a neighbourhood of
$U$ be such that $f(x) = g(x)$ for all $x\in\partial U$. Assume that
$U$ is bounded or, more generally, that $f$ and $g$ are proper and the
difference map $f-g: \cl U \to \R^k$ has bounded image. Then
\[
\deg(f,U,y) = \deg(g,U,y)
\]
for any $y \in \R^k \setminus f(\partial U)$.

%

\medskip Observe that if $f\colon\R^k\to\R^k$ is proper then
$\deg(f,\R^k,p)$ is well-defined for any $p\in\R^k$, moreover, by the
above property, it is actually independent of the choice of $p$. In
this case we shall simply write $\deg(f)$ instead of the more
cumbersome $\deg(f,\R^k,p)$.
%

Finally, we mention a well-known integral formula for the computation
of the degree of an admissible triple when the dimension of the space
is $k=2$ (see e.g.\ \cite{De95,Llo}) which we present here in a
simplified form.

Assume that $f\colon\R^2\to\R^2$ is a proper map, let
$B_r\subseteq\R^2$ be a ball of radius $r>0$ centered at the origin
and let $S_r=rS^1=\partial B_r$. If $0\notin f(S_r)$, then the degree of
$f$ in $B_r$ relative to $0$ coincides with the winding number of the 
curve $\sigma\colon [0,1]\to\R^2$ given by 
\[
 \sigma(t)=f\big(r\cos(2\pi t),r\sin(2\pi t)\big).
\]
In other words,
\[
\deg(f,B_r,0)= \frac{1}{2\pi}\int_{f(S_r)} \omega
\]
where $\omega$ is the $1$-form 
\[
 \omega=\frac{x\ud y}{x^2+y^2} - \frac{y\ud x}{x^2+y^2}
\]
In fact, if $B_r$ is large enough to contain the compact set $f^{-1}(0)$, then
\begin{equation}\label{fintgen}
  \deg(f)= \frac{1}{2\pi}\int_{f(S_r)} \omega.
\end{equation}

\section{Piecewise continuous linear maps and topological degree}

Observe that any nondegenerate continuous strongly piecewise linear
map $G$ is differentiable in $\R^k\setminus\cup_{i=1}^n\partial C_i$.
It is easily shown that $G$ is proper, and therefore $\deg(G,\R^k,p)$
is well-defined for any $p\in\R^k$. In fact, one immediately checks
that $G^{-1}(0)=\{0\}$. So, as remarked above, we can write $\deg(G)$
in lieu of $\deg(G,\R^k,p)$.

The following linear algebra result plays an important role in the
paper.

\begin{proposition}\label{lem:lemalg}
  Let $A$ and $B$ be linear automorphisms of $\R^k$. Assume that for
  some $v\in\R^k\setminus\{0\}$, $A$ and $B$ coincide on the space $\{
  v\}^\perp$.  Then, the map $\mathcal{L}_{AB}$ defined by $x\mapsto
  Ax$ if $ \scal{v}{x} \geq 0$, and by $x\mapsto Bx$ if $ \scal{v}{x}
  \leq 0$, is a homeomorphism if and only if $\det(A)\cdot\det(B)>0$.
\end{proposition}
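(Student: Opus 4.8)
The plan is to reduce the problem to a one–dimensional picture by exploiting the fact that $A$ and $B$ agree on the hyperplane $\{v\}^\perp$. First I would choose coordinates adapted to the splitting $\R^k=\R v\oplus\{v\}^\perp$: write a generic point as $x=s\,v + w$ with $s\in\R$ and $w\in\{v\}^\perp$, so that $\scal{v}{x}\ge 0$ iff $s\ge 0$. Since $Aw=Bw$ for all $w\in\{v\}^\perp$, the map $\cL_{AB}$ has the form $\cL_{AB}(s v + w) = A_0 w + s\,a$ for $s\ge 0$ and $\cL_{AB}(s v + w)=A_0 w + s\,b$ for $s\le 0$, where $A_0$ is the common restriction to $\{v\}^\perp$, $a=Av$, and $b=Bv$. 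Because $A$ and $B$ are automorphisms, $a$ and $b$ are each not contained in the image $A_0(\{v\}^\perp)$, a hyperplane $W$; hence $a,b$ lie strictly on one side or the other of $W$, and the sign of $\det A$ (resp.\ $\det B$) relative to a fixed orientation is determined by which side $a$ (resp.\ $b$) is on. This makes the determinant condition $\det(A)\det(B)>0$ equivalent to the geometric statement that $a$ and $b$ lie on the \emph{same} side of $W$.

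For the ``if'' direction, assume $a,b$ are on the same side of $W$. I would show $\cL_{AB}$ is a bijection by solving $\cL_{AB}(x)=y$ explicitly. Decompose the target $y = A_0 w' + \tau\, \hat v$ where $\hat v$ is a fixed vector transverse to $W$ on the side containing $a$ and $b$; then $A_0$ being an isomorphism of $\{v\}^\perp$ onto $W$ determines $w$ uniquely once $s$ is known, and $s$ is determined by the transverse component: for $y$ on the $a$-side one uses the $s\ge0$ branch, for $y$ on the $b$-side the $s\le0$ branch, and the common value $s=0$ is consistent because both branches agree there. Injectivity and surjectivity then follow because each branch is a linear isomorphism onto its closed half-space and the two half-spaces cover $\R^k$ and overlap exactly on $W$. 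Continuity of the inverse is automatic since $\cL_{AB}$ is a proper continuous strongly piecewise linear map (two pieces), or one can simply note the piecewise-linear inverse is continuous by Lemma~\ref{lem.locglob} applied to any neighborhood of $0$.

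For the ``only if'' direction, assume $\det(A)\det(B)<0$ (the zero case is excluded since $A,B$ are automorphisms), so $a$ and $b$ lie on opposite sides of $W$. Then the images of the two closed half-spaces $\{s\ge0\}$ and $\{s\le0\}$ under the respective linear maps are the two closed half-spaces bounded by $W$ determined by $a$ and by $b$ — and since $a,b$ are on opposite sides, these two image half-spaces are the \emph{same} closed half-space $\overline{H}$ of $\R^k$ (the one not... more precisely, $A(\{s\ge0\})$ is the closed half-space on the $a$-side of $W$ together with $W$, and $B(\{s\le0\})$ is the closed half-space on the $b$-side; opposite sides means these two half-spaces have union $\R^k$ and intersection $W$, OR they coincide — I would check the orientation bookkeeping carefully here, as this is the delicate point). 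The upshot I am aiming for is that $\cL_{AB}$ maps a full neighborhood of $0$ onto only a half-space worth of directions, so it fails to be surjective near $0$, hence is not a homeomorphism. Equivalently, and more robustly, I can compute the Brouwer degree $\deg(\cL_{AB})$ via the boundary-dependence and the integral formula \eqref{fintgen}: on $S^1$ (taking $k=2$ for the heart of the argument, or arguing by restricting to the plane $\Span\{v,\hat v\}$ in general) the curve $t\mapsto \cL_{AB}$ traverses an arc, reverses, and traverses back, giving winding number $0$; by Theorem~\ref{th.4pr96} a local homeomorphism would need degree $\pm1$, a contradiction.

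The main obstacle I anticipate is the orientation bookkeeping in the ``only if'' part: translating ``$\det(A)\det(B)<0$'' cleanly into ``$a$ and $b$ on opposite sides of the hyperplane $W=A_0(\{v\}^\perp)$'' requires fixing an orientation of $\R^k$, comparing it with the orientation induced on the half-space boundary, and tracking how the sign of the determinant of a block map $\begin{psmallmatrix} A_0 & a\end{psmallmatrix}$ (in suitable bases) depends on the transverse component of $a$. Once that dictionary is set up, both implications are short; the surjectivity failure (or the degree-zero computation) is then essentially immediate. I would therefore spend the bulk of the write-up carefully establishing the coordinate normal form and the sign-versus-side correspondence, and treat the two implications as corollaries of it.
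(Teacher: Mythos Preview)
Your geometric approach---working in the codomain and tracking where $a=Av$ and $b=Bv$ sit relative to the hyperplane $W=A(\{v\}^\perp)=B(\{v\}^\perp)$---is correct and is genuinely different from the paper's argument. The paper instead works in the \emph{domain}: it writes the matrix of $A^{-1}B$ in a basis $w_1,\dots,w_{k-1},v$ adapted to $\{v\}^\perp$, observes it is block upper-triangular with lower-right entry $\gamma_k$ (the $v$-coefficient of $A^{-1}Bv$), and shows $\gamma_k>0\iff\det A\cdot\det B>0$. Non-injectivity for $\gamma_k<0$ and bijectivity for $\gamma_k>0$ are then obtained by short explicit computations with coordinates. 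Your ``same side of $W$'' condition is exactly $\gamma_k>0$ in disguise (since $b=\gamma_k a+(\text{something in }W)$), so the two routes agree at the core; yours is more pictorial, the paper's more mechanical and self-contained.

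Two points to tighten. First, the ``delicate point'' you flag resolves cleanly: $B(\{s\le 0\})$ is the closed half-space on the $(-b)$-side of $W$, so when $a$ and $b$ lie on opposite sides of $W$ the two image half-spaces \emph{coincide} (both are the $a$-side), and $\cL_{AB}$ fails surjectivity. That is all you need for the ``only if'' direction; no degree argument is required. Second, your degree fallback does not work as stated in dimension $k>2$: restricting $\cL_{AB}$ to the plane $\Span\{v,w\}$ for $w\in\{v\}^\perp$ does not generally land in a $2$-plane (the image lies in $\Span\{Av,Bv,Aw\}$, which is typically $3$-dimensional), so the winding-number formula \eqref{fintgen} is not directly available. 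Drop that alternative and rely on the non-surjectivity argument, which is both simpler and dimension-independent.
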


\begin{proof}
  Let $w_1, \ldots, w_{n-1}$ be a basis of the hyperplane
  $\{v\}^\perp$, then $w_1, \ldots, w_{n-1},v$ is a basis of
  $\R^n$. The matrix of $A^{-1}B$ in this basis is given by
  \[
  \left(
    \begin{array}[c]{c | c}
      \boldsymbol{I}_{n-1} &
      \begin{matrix}
        \gamma_1 \\
        \vdots \\
        \gamma_{n-1}
      \end{matrix} \\[7mm]
      \hline \rule{0pt}{4mm} \boldsymbol{0}^t_{n-1} & \gamma_n
    \end{array}\right)
  \]
  where $ \boldsymbol{I}_{n-1}$ is the $n-1$ unit matrix,
  $\boldsymbol{0}_{n-1}$ is the $n-1$ null vector and the $\gamma_i$'s
  are defined by
  \[
  A^{-1}Bv = \sum_{i=1}^{n-1}\gamma_i w_i + \gamma_n v.
  \]
  Thus $\gamma_n $ is positive if and only if $\det(A)\cdot\det(B)$ is
  positive.

  Observe that if $\gamma_n$ is negative then $\cL_{AB}$ is not
  one--to--one. In fact, being
  \[
  Aw_i = Bw_i,\, \forall i=1,\ldots,n-1,\;\text{ and }\;
  \scal{\sum_{i=1}^{n-1}-\dfrac{\gamma_i}{\gamma_n}\,w_i+\frac{1}{\gamma_n}\,v}{v}=
  \dfrac{\|v\|^2}{\gamma_n}<0,
  \]
  we get
  \begin{multline*}
    \cL_{AB}(v) =A\left( \sum_{i=1}^{n-1}-\dfrac{\gamma_i}{\gamma_n}\,
      w_i +\frac{1}{\gamma_n} \, A^{-1}Bv\right)
    = \sum_{i=1}^{n-1}-\dfrac{\gamma_i}{\gamma_n}\,A w_i +\frac{1}{\gamma_n} \, Bv\\
    =B\left( \sum_{i=1}^{n-1}-\dfrac{\gamma_i}{\gamma_n}\, w_i
      +\frac{1}{\gamma_n} \, v\right) =\cL_{AB}\left(
      \sum_{i=1}^{n-1}-\dfrac{\gamma_i}{\gamma_n}\, w_i +
      \frac{1}{\gamma_n} \, v\right).
  \end{multline*}
 
  We now prove that $\cL_{AB}$ is injective if $\gamma_n$ is
  positive. Assume this is not true. Since both $A$ and $B$ are
  invertible, there exist $z_A, z_B \in\R^n$ such that $\scal{v}{z_A}
  > 0$, $\scal{v}{z_B} < 0$ and $A z_A = B z_B$ or, equivalently,
  $A^{-1}B z_B = z_A$. Let
  \begin{equation*}
    z_A = \sum_{i=1}^{n-1}c^i_A w_i + c_A v, \qquad  z_B =
    \sum_{i=1}^{n-1}c^i_B w_i + c_B v .
  \end{equation*}
  Clearly $c_A >0$, $c_B < 0$. The equality $A^{-1}B z_B = z_A$ is
  equivalent to
  \[
  \sum_{i=1}^{n-1}c^i_B w_i + c_B \sum_{i=1}^{n-1}\gamma_i w_i + c_B
  \gamma_n v = \sum_{i=1}^{n-1}c^i_A w_i + c_A v.
  \]
  Consider the scalar product with $v$, we get $ c_B \gamma_n
  \norm{v}^2 = c_A\norm{v}^2 $, which is a contradiction.

  We finally prove that, if $\gamma_n$ is positive, then $\cL_{AB}$ is
  surjective.  Let $z \in \R^n$. There exist $y_A$, $y_B \in \R^n$
  such that $A y_A = B y_B = z$. If either $\scal{v}{y_A} \geq 0$ or
  $\scal{v}{y_B} \leq 0$, there is nothing to prove.  Let us assume
  $\scal{v}{y_A} < 0$ and $\scal{v}{y_B} > 0$.  In this case $A^{-1}B
  y_B = y_A$ and proceeding as above we get a contradiction.
\end{proof}

\begin{corollary}\label{c.grado2l}
  Let $A,B$ and $v$ be as in Proposition \ref{lem:lemalg}. Define
  $\mathcal{L}_{AB}$, as in Proposition \ref{lem:lemalg}, by
  \[
  \mathcal{L}_{AB}(x)=\begin{cases}
    Ax &\text{if $ \scal{v}{x} \geq 0$,}\\
    Bx &\text{if $ \scal{v}{x} \leq 0$.}
  \end{cases}
  \]
  Assume that $\det(A)\cdot\det(B)>0$. Then
  $\deg(\mathcal{L}_{AB})=\sign\det(A)=\sign\det(B)$.
\end{corollary}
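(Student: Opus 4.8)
The plan is to compute the degree of $\mathcal{L}_{AB}$ via the winding-number formula \eqref{fintgen}, exploiting the homotopy invariance of the degree to reduce to a convenient normal form. First I would observe that $\mathcal{L}_{AB}$ is a nondegenerate continuous strongly piecewise linear map (the two cones are the closed half-spaces $\{\scal{v}{x}\ge 0\}$ and $\{\scal{v}{x}\le 0\}$, and Proposition \ref{lem:lemalg} guarantees that under the hypothesis $\det(A)\cdot\det(B)>0$ the map is a homeomorphism, hence nondegenerate since $\mathcal{L}_{AB}^{-1}(0)=\{0\}$). Therefore $\deg(\mathcal{L}_{AB})$ is well-defined, and by the remark preceding this corollary it equals $\deg(\mathcal{L}_{AB},B_r,0)$ for any $r>0$.

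Next I would reduce to the planar case $k=2$: since $A$ and $B$ agree on the hyperplane $\{v\}^\perp$, the splitting $\R^k=\{v\}^\perp\oplus\Span(v)$ is preserved in a suitable sense, and by a standard product/suspension argument for the degree (or by restricting attention to the $2$-plane spanned by $v$ and a chosen complementary direction and noting that the degree of $\mathcal{L}_{AB}$ factors as the degree of its action there times $\sign\det$ on $\{v\}^\perp$) one is led to computing the winding number of the image of a small circle. Concretely, in the plane with $v=e_2$, the map sends the upper half-plane by $A$ and the lower half-plane by $B$, with $A$ and $B$ agreeing on the $x$-axis. Tracking the image of the unit circle: as $t$ runs over the upper semicircle, $\sigma(t)=A(\cos2\pi t,\sin2\pi t)$ traces an arc from $Ae_1$ to $-Ae_1=-Be_1$ that stays in the half-plane $A(\text{upper})$; as $t$ runs over the lower semicircle, $\sigma(t)=B(\cos2\pi t,\sin2\pi t)$ closes the loop through $B(\text{lower})$. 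Because $A$ (resp.\ $B$) is a linear isomorphism, the arc through the $A$-image has the orientation given by $\sign\det A$, and likewise for $B$; since $\sign\det A=\sign\det B$ by hypothesis, the two half-arcs wind consistently and the total winding number of $\sigma$ is exactly $\sign\det A$.

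A cleaner way to finish — and the route I would actually write — is by homotopy. Keeping $v$, $w_1,\dots,w_{n-1}$ fixed, deform the pair $(A,B)$ to $(A,A)$: the straight-line homotopy $B_s = A + s(B-A)$ on $\Span(v)$ (acting as $A$ on $\{v\}^\perp$) keeps $\gamma_n(s)$ of one sign throughout, namely the sign of $\det(A)\cdot\det(B)$, so by Proposition \ref{lem:lemalg} each $\mathcal{L}_{A B_s}$ is a homeomorphism, hence $0$ is never in the image of $\partial B_r$ along the homotopy, and by boundary dependence (homotopy invariance) of the degree $\deg(\mathcal{L}_{AB})=\deg(\mathcal{L}_{AA})=\deg(A)=\sign\det A$. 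The last equality is the classical fact that the degree of a linear isomorphism equals the sign of its determinant, immediate from \eqref{defgrado}.

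The main obstacle is making the reduction from $\R^k$ to the two-dimensional picture fully rigorous, since the winding-number formula \eqref{fintgen} is stated only for $k=2$; the homotopy argument circumvents this, so the only real care needed is checking that $\gamma_n(s)$ does not vanish along the deformation — but $\gamma_n$ depends affinely (indeed linearly) on the $\Span(v)$-component of $B$, and at $s=0,1$ it has the same sign, so by the intermediate value theorem it keeps that sign for all $s\in[0,1]$, which is exactly what the hypothesis $\det(A)\cdot\det(B)>0$ provides.
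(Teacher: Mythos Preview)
Your homotopy argument is correct: defining $B_s$ to agree with $A$ on $\{v\}^\perp$ and to satisfy $B_sv=(1-s)Av+sBv$, one gets $\gamma_n(s)=(1-s)+s\gamma_n$, which is a convex combination of $1$ and $\gamma_n>0$ and hence stays positive on $[0,1]$; so each $\mathcal{L}_{AB_s}$ is a homeomorphism by Proposition~\ref{lem:lemalg}, the homotopy is admissible, and $\deg(\mathcal{L}_{AB})=\deg(A)=\sign\det A$. (Your appeal to the ``intermediate value theorem'' is phrased backwards---IVT gives zeros, not their absence---but the affine/convex-combination argument you intend is fine.) Your first attempt via the winding-number formula \eqref{fintgen} is, as you yourself note, confined to $k=2$ and not made rigorous, so it is the homotopy route that carries your proof.

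The paper's proof, however, is much shorter and more direct, and it is worth seeing why you missed it. Once Proposition~\ref{lem:lemalg} tells you $\mathcal{L}_{AB}$ is a homeomorphism, every point $p\in\R^k$ has a \emph{unique} preimage $q$. Choose $p$ so that $q\notin\{v\}^\perp$; then in a neighbourhood of $q$ the map $\mathcal{L}_{AB}$ coincides with either $A$ or $B$, hence is $C^1$ with nonsingular Jacobian, and $p$ is a regular value with a single preimage. Formula~\eqref{defgrado} gives immediately $\deg(\mathcal{L}_{AB})=\sign\det A$ (or $\sign\det B$, the same thing). No homotopy, no reduction of dimension, no winding numbers: the degree is simply read off at a single smooth point. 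Your approach buys nothing extra here; it would be the right idea if invertibility were not already known and one had to deform to a situation where it is, but Proposition~\ref{lem:lemalg} has already done that work.
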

\begin{proof}
  The map $\mathcal{L}_{AB}$ is invertible by Proposition
  \ref{lem:lemalg}. Take any $p\in\R^k$ such that the singleton
  $\{q\}=\mathcal{L}_{AB}^{-1}(p)$ does not belong to 
  $v^\perp$. Then, Formula \ref{defgrado} yields the assertion.
\end{proof}

Another useful tool for the computation of the topological degree of a
strongly piecewise linear map is the following lemma:

\begin{lemma}\label{thm.1}
  If $G$ is a continuous strongly piecewise linear map as in
  Definition \ref{def.pwlin} with $\det(L_i)>0$, $\forall i=1, \ldots,
  n$, then $\deg (G)>0$. In particular, if there exists $q \neq 0$
  whose preimage $G^{-1}(q)$ is a singleton that belongs to at most
  two of the convex polyhedral cones $C_i$, then $\deg(G)=1$.
\end{lemma}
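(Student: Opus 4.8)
The plan is to separate the two assertions. For the first — that $\deg(G)>0$ under the hypothesis $\det(L_i)>0$ for all $i$ — I would use the integral formula \eqref{fintgen} together with the positive homogeneity of $G$. Fix $r>0$ large enough that $G^{-1}(0)=\{0\}\subseteq B_r$, and parametrize $S_r$ counterclockwise; then $\deg(G)$ equals the winding number of the loop $\sigma(t)=G(r\cos 2\pi t, r\sin 2\pi t)$ about the origin. Since $G$ never vanishes on $S_r$, this winding number is well-defined. On each arc of $S_r$ lying in a single cone $C_i$, $\sigma$ agrees with $t\mapsto L_i(r\cos 2\pi t,r\sin 2\pi t)$, which is an orientation-preserving linear image of an arc of the circle (because $\det L_i>0$); hence the argument of $\sigma(t)$ is strictly increasing on the interior of that arc. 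Concatenating over the finitely many cones $C_1,\dots,C_n$ encountered as $t$ runs once around $S_r$, and using continuity of $\sigma$ at the finitely many boundary rays (where consecutive $L_i$'s agree), the total argument change is a strictly positive multiple of $2\pi$. Therefore the winding number is a positive integer, i.e.\ $\deg(G)>0$.

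For the second assertion, suppose there is a point $q\neq 0$ whose preimage $G^{-1}(q)$ is a single point $x_0$ lying in at most two of the cones, say $x_0\in C_i\cap C_j$ (possibly $i=j$). I would compute $\deg(G)=\deg(G,\R^k,q)$ using \eqref{defgrado}-type reasoning adapted to the piecewise-linear setting. If $x_0$ lies in the interior of a single cone $C_i$, then $G$ is differentiable at $x_0$ with Jacobian $L_i$, and since $x_0$ is the only preimage, $\deg(G)=\sign\det(L_i)=1$. If instead $x_0$ lies on the common boundary ray $C_i\cap C_j$ but in no other cone, I would invoke Corollary \ref{c.grado2l}: in a neighborhood of $x_0$ the map $G$ coincides with $\mathcal{L}_{L_iL_j}$ (the two selection functions agree on the separating hyperplane, as noted after Definition \ref{def.pwlin}), so by excision $\deg(G)=\deg(G, V, q)$ for a small ball $V$ around $x_0$, and this local degree equals $\deg(\mathcal{L}_{L_iL_j})=\sign\det(L_i)=1$ by the corollary. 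Combining with $\det(L_i)>0$ in both cases gives $\deg(G)=1$.

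The main obstacle I anticipate is making the first part fully rigorous: one must control the behavior of $\sigma$ at the boundary rays and argue that the argument, while possibly only nondecreasing in the closed arcs, strictly increases overall, ruling out the degenerate possibility that $\sigma$ traces a path of zero winding. This is handled by noting each $L_i$ is a genuine automorphism with positive determinant, so the restriction of $\sigma$ to each closed arc strictly increases the argument by a positive amount, and the finitely many pieces are glued continuously; hence the total increment is a positive multiple of $2\pi$ and cannot be zero. A secondary subtlety in the second part is justifying the local reduction to $\mathcal{L}_{L_iL_j}$ when $x_0$ is on a cone boundary: one must choose the neighborhood $V$ small enough that it meets only $C_i$ and $C_j$, which is possible precisely because $x_0$ belongs to no other cone and the cones are finitely many closed sets.
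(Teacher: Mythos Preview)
Your treatment of the second assertion is essentially the paper's: excise to a neighborhood $V$ of the unique preimage meeting only $C_i\cup C_j$, identify $G$ there with $\mathcal{L}_{L_iL_j}$, and invoke Corollary~\ref{c.grado2l}. The paper phrases the identification via the boundary-dependence property rather than literal coincidence on $V$, but this is cosmetic.

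For the first assertion, however, your winding-number argument proves $\deg(G)>0$ only when $k=2$: formula~\eqref{fintgen} and the phrase ``argument of $\sigma(t)$'' are planar notions. The lemma is stated for $G\colon\R^k\to\R^k$ via Definition~\ref{def.pwlin}, and it is used in that generality in Theorem~\ref{thm:linnpezzi}, so this is a genuine gap. The paper closes it with a short regular-value count that works in every dimension: since $\bigcup_i G(\partial C_i)$ is nowhere dense, one can pick $x\in G(C_1)\setminus\bigcup_i G(\partial C_i)$; each preimage of $x$ then lies in the interior of some cone, where $G$ is differentiable with positive Jacobian determinant, and \eqref{defgrado} gives $\deg(G)=\#\,G^{-1}(x)\geq 1$. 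Your monotone-argument picture is a nice geometric view in the plane (and even gives a bit more, namely that $G|_{S_r}$ traverses its image without reversing orientation), but it does not extend to $k\geq 3$, whereas the paper's argument is both shorter and dimension-free.
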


\begin{proof}
  Let us assume in addition that $q\notin\cup_{i=1}^nG\big(\partial
  C_i\big)$. Observe that the set $\cup_{i=1}^nG\big(\partial
  C_i\big)$ is nowhere dense hence $A:=G(C_1)\setminus\cup_{i=1}^n
  G\big(\partial C_i \big)$ is non-empty. Take $x\in A$ and observe
  that if $y\in G^{-1}(x)$ then $y \notin\cup_{i=1}^n \partial
  C_i$. Thus, by \eqref{defgrado},
  \begin{equation}\label{eq.1}
    \deg(G)=\sum_{y\in G^{-1}(x)}\mathrm{sign}\,\det G'(y)
    =\# G^{-1}(x).
  \end{equation}
  Since $G^{-1}(x)\neq\emptyset$, $\deg(G)>0$.

  We now consider the second part of the assertion. Assume in addition
  that $q\notin\cup_{i=1}^nG\big(\partial C_i\big)$.  Taking $x=q$ in
  \eqref{eq.1} we get $\deg(G)=1$.

  Let us now remove the additional assumption. Let $\{p\}=G^{-1}(q)$
  be such that $p\in\partial C_i\cap\partial C_j$ for some $i\neq
  j$. Observe that by assumption $p\neq 0$ does not belong to any cone
  $\partial C_s$ for $s\notin\{ i,j\}$. Thus one can find a
  neighborhood $V$ of $p$, with $V\subset \inter (C_i\cup C_j
  \setminus\{0\})$. By the excision property of the topological degree
  $\deg(G)=\deg(G,V,p)$. Let $\mathcal{L}_{L_iL_j}$ be a map as in
  Proposition \ref{lem:lemalg}.  Observe that, by Corollary
  \ref{c.grado2l}, the assumption on the signs of the determinants of
  $L_i$ and $L_j$ imply that $\deg(\mathcal{L}_{L_iL_j})=1$.  Also
  notice that $\mathcal{L}_{L_iL_j}|_{\partial V}=G|_{\partial
    V}$. Hence, by the excision and boundary dependence properties of
  the degree we have
  \[
  1=\deg(\mathcal{L}_{L_iL_j})=\deg(\mathcal{L}_{L_iL_j},V,p)=\deg(G,V,p).
  \]
  Thus, $\deg(G)=1$ as claimed.
\end{proof}

\begin{remark}\label{rem:th1}
  One can show that if $\det(L_i)<0$, for all $i=1, \ldots, n$, then
  \[
  \deg (G)<0.
  \]
  In particular, if there exists $q \neq 0$ whose preimage $G^{-1}(q)$
  is a singleton that belongs to at most two of the convex cones
  $C_i$, then $\deg(G)=-1$. To see this, it is enough to compose $G$
  with the permutation matrix
  \[
  P=\begin{pmatrix}
    J  & 0\\
    0 & \mathbf{I}_{n-2}
  \end{pmatrix},\quad J:=\begin{pmatrix}
    0  & 1\\
    1 & 0
  \end{pmatrix},
  \]
  and $\mathbf{I}_{n-2}$ is the $(n-2)\times(n-2)$ identity matrix.
\end{remark}

We conclude this section by observing that if $G$ is a nondegenerate
continuous strongly piecewise linear map in $\R^2$ then, by
\eqref{fintgen},
\begin{equation}\label{fintegrale}
  \deg(G)= \frac{1}{2\pi}\int_{G(S^1)} \omega.
\end{equation}
(observe, in fact, that $G^{-1}(0)=\{0\}$). This formula plays an
important role in what follows.

\section{Main results: invertibility of piecewise linear
  maps}\label{sez.main}

We now turn to our main scope that is invertibility of continuous
strongly piecewise linear maps.  We begin with a relatively simple
result.

\begin{proposition}\label{prop.inv}
  Let $G$ be continuous strongly piecewise linear. If $G$ is
  invertible, then it is nondegenerate.
\end{proposition}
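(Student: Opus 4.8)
The plan is to argue by contradiction. Suppose $G$ is invertible but degenerate, so that among the linear pieces $L_1,\dots,L_n$ the quantity $\sign(\det L_i)$ is either zero for some $i$ or changes sign. The degenerate case splits naturally into two subcases which I would treat separately.

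First I would dispose of the case $\det L_i = 0$ for some $i$. Then $L_i$ has nontrivial kernel, and since $C_i$ has nonempty interior, I can pick a small open ball $B$ inside $\inter C_i$; on $B$ the map $G$ agrees with the linear map $L_i$, which is not injective. Shrinking $B$ if necessary so that $B$ still meets a full-dimensional piece of $\Ker L_i$ inside $\inter C_i$, one finds two distinct points of $B$ with the same image, contradicting injectivity of $G$ (here positive homogeneity and the cone structure guarantee that $\inter C_i$ contains such a pair; alternatively, just note $G|_{B}=L_i|_B$ fails to be open).

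Next I would handle the case where all $\det L_i \ne 0$ but $\sign(\det L_i)$ is not constant. Then there exist indices $i$ and $j$ with $\det L_i > 0$ and $\det L_j < 0$. Because $C_1,\dots,C_n$ is a decomposition of $\R^k$ into closed polyhedral cones with common vertex at the origin, one can connect $\inter C_i$ to $\inter C_j$ through a chain of cones whose consecutive members share a common $(k-1)$-dimensional face; along this chain the sign of the determinant must flip across some common facet, so after relabeling I may assume $C_i$ and $C_j$ themselves share a common $(k-1)$-dimensional face $F$, spanned by $\{v\}^\perp$ for the normal vector $v$ to $F$. On $F$ the two linear maps agree (as noted after Definition \ref{def.pwlin}, $L_i x = L_j x$ for $x \in C_i \cap C_j$, and $C_i\cap C_j \supseteq F$ which spans $\{v\}^\perp$), so $L_i$ and $L_j$ coincide on $\{v\}^\perp$. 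Now consider the auxiliary map $\cL_{L_iL_j}$ of Proposition \ref{lem:lemalg}: it agrees with $G$ on a neighborhood of the face $F$ strictly inside $C_i \cup C_j$ (away from the origin and from the other cones' boundaries), but globally it is a different object. Since $\det(L_i)\cdot\det(L_j) < 0$, Proposition \ref{lem:lemalg} tells us $\cL_{L_iL_j}$ is \emph{not} a homeomorphism; more precisely, the computation in that proof (the case $\gamma_n < 0$) exhibits a nonzero point $q \in \{v\}^\perp$ and another point, both in $C_i \cup C_j$, mapped to the same value by the two linear rules. I would transplant this failure of injectivity back to $G$: rescaling so that the relevant points lie in the part of $C_i\cup C_j$ where $G$ genuinely coincides with $L_i$ on one side and $L_j$ on the other, and using that these points avoid all other cones, one gets two distinct points with $G(x_1) = G(x_2)$, contradicting injectivity.

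The main obstacle is the combinatorial/geometric step of reducing to two adjacent cones sharing a full facet and then verifying that the non-injective pair produced by Proposition \ref{lem:lemalg} can be realized honestly by $G$ rather than merely by the two-piece model $\cL_{L_iL_j}$: one must check that the offending points can be scaled into the interiors of $C_i$ and $C_j$ (so that they miss $\bigcup_{s\ne i,j}C_s$ and the origin), which is where positive homogeneity of $G$ and the full-dimensionality of the cones are used. Everything else is routine linear algebra plus the structural remarks already recorded after Definition \ref{def.pwlin}.
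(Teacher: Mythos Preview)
Your overall strategy matches the paper's: argue by contradiction, first dispose of a vanishing determinant, then reduce the sign-change case to two cones sharing a codimension-one facet and invoke the linear-algebra content of Proposition \ref{lem:lemalg}. The zero-determinant case is fine and essentially identical to the paper's.

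There is, however, a genuine gap in the second case. The non-injective pair produced in the proof of Proposition \ref{lem:lemalg} (when $\gamma_n<0$) consists of the normal vector $v$ itself and a certain point on the other side of the hyperplane; neither of these need lie in $C_i$ or $C_j$, since $C_i$ and $C_j$ are in general \emph{proper} subcones of the half-spaces $\{\langle v,x\rangle\ge 0\}$ and $\{\langle v,x\rangle\le 0\}$. You propose to repair this by ``rescaling'' and appeal to positive homogeneity, but that cannot work: the cones $C_i,C_j$ are themselves invariant under positive dilations, so if a point lies outside $C_i$ then every positive multiple of it lies outside $C_i$ as well. What is needed instead is a \emph{translation along the shared facet}: since $L_i$ and $L_j$ agree on $\Span(C_i\cap C_j)$, adding to both offending points a sufficiently large vector in the relative interior of $C_i\cap C_j$ preserves the equality $L_i z_1=L_j z_2$ while pushing $z_1$ and $z_2$ arbitrarily close (in direction) to the facet, hence into $C_i$ and $C_j$ respectively. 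This is precisely the construction the paper carries out, and it is the missing ingredient at the step you yourself flagged as the main obstacle.

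A minor inaccuracy: the two points produced by the $\gamma_n<0$ case of Proposition \ref{lem:lemalg} are $v$ and $\frac{1}{\gamma_n}v-\sum_i\frac{\gamma_i}{\gamma_n}w_i$; neither lies in $\{v\}^\perp$ as you wrote.
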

\begin{proof}
  Let $C_i$, $i=1, \dots, n$ be the polyhedral cones decomposition of
  $\R^n$ relative to $G$ and let $L_i = \left. G \right\vert_{C_i}$.
  We need to show that $\det(L_i) \neq 0$ for any $i=1, \ldots, n$ and
  that all these determinants have the same sign.

  We first prove that no such determinant is null.  Assume by
  contradiction that, for some $i\in\{1,\ldots,n\}$, $\det(L_i) =
  0$. Without loss of generality we may assume $i=1$. Let $v \in
  \ker(L_1)\setminus\{0\}$. If $v \in C_1$, then $G(v) = L_1 v = 0 =
  G(0)$, so that $G$ is not injective. A contradiction.  If $v \notin
  C_1$, then there exist $w\in \inter(C_1)$ and $\lambda \in \R$,
  $\lambda \neq 0$ such that $w + \lambda v \in \inter(C_1)$. Thus
  $G(w + \lambda v) = L_1(w + \lambda v) = L_1 w = G(w)$, so that $G$
  is, also in this case, not injective.  These contradictions show
  that the determinants $\det(L_i)$'s cannot be zero.

  We now show that all these determinants have the same sign. As in
  the first part of the proof, we proceed by contradiction. Let
  \[
  S := \left\{ C_i \cap C_j \colon i, j \in \{1, \ldots, n\}, \
    \mathrm{codim}\;\Span{ (C_i \cap C_j)} \geq 2 \right\}.
  \]
  Notice that when the dimension $k$ of the ambient space $\R^k$ is $1$, then
  $S=\emptyset$, and if $k=2$ then $S$ is merely the origin. Assume
  by contradiction that there are $i,j\in\{1,\ldots,n\}$ such that
  $\det(L_i)\det(L_j)<0$. Since $\R^k \setminus S$ is arcwise
  connected, it is not difficult to prove that, there must exist two
  cones $C_{\overline{i}}$ and $C_{\overline{j}}$ such that
  $\mathrm{codim}\;\Span(C_{\overline{i}} \cap C_{\overline{j}}) = 1$
  and $\det(L_{\overline{i}}) \det(L_{\overline{j}}) < 0$. Without any
  loss of generality we may assume $\overline i=1$, $\overline j =
  2$. Let $v\in \R^k$ such that $\Span(C_1 \cap C_2) = v^\perp$, $C_1
  \subset \left\{x \in \R^k \colon \scal{v}{x} \geq 0 \right\}$, $C_2
  \subset \left\{x \in \R^k \colon \scal{v}{x} \leq 0 \right\}$. Let
  $w_1, w_2, \ldots, w_{n-1}$ be a basis for $\Span(C_1 \cap C_2)$
  such that
  \[
  \left\{ \displaystyle\sum_{i=1}^{n-1}c_i w_i \colon c_i \geq 0 \quad
    i=1, \ldots, n-1 \right\} \subseteq ( C_1 \cap C_2 ),
  \]
  and let
  \[
  L_1^{-1}L_2v = \gamma_n v + \displaystyle\sum_{i=1}^{n-1}\gamma_i
  w_i .
  \]
  As in the proof of Proposition 3.1 one can show that
  $\gamma_n<0$. Take $c_1, \ldots , c_{n-1} > 0$ and define
  \[
  z_1 := v + \displaystyle\sum_{i=1}^{n-1}c_i w_i\quad\text{and}\quad
  z_2 := \dfrac{1}{\gamma_n} v + \displaystyle\sum_{i=1}^{n-1}\left(
    c_i - \dfrac{\gamma_i}{\gamma_n} \right) w_i.
  \]
  An easy computation shows that $L_1 z_1 = L_2 z_2$. Choosing $c_1,
  \ldots, c_{n-1}$ large enough, we can assume that $z_1 \in C_1$,
  $z_2 \in C_2$. Thus $G(z_1) = G(z_2)$, i.e.\ $G$ is not injective,
  against the assumption. This contradiction shows that all
  determinants $\det(L_s)$, $s\in\{1,\dots,n\}$, share the same sign.
\end{proof}

Simple considerations (e.g.\ Examples \ref{ex.5pezzi} and
\ref{ex.ninv4p} below) show that the converse of Propositions
\ref{prop.inv} is not true in general. In order to partially invert
this proposition, different situations must be considered. We begin
with a simple consequence of Lemma \ref{thm.1}.

\begin{theorem}\label{thm:linnpezzi}
  Let $G:\R^k\to\R^k$ be a continuous strongly piecewise linear map as
  in Definition \ref{def.pwlin} with $\det(L_i)$ of constant sign for
  all $i=1, \ldots, n$. Assume also that there exists $q\in\R^k$ whose
  preimage $G^{-1}(q)$ is a singleton that belongs to at most two of
  the polyhedral cones $C_i$. Then $G$ is a Lipschitz
  homeomorphism.
\end{theorem}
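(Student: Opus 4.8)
The plan is to combine the degree computation of Lemma \ref{thm.1} with the characterization of local homeomorphisms given in Theorems \ref{th.4pr96} and \ref{th.5pr96}, together with Lemma \ref{lem.locglob} to pass from local to global invertibility. First I would observe that, after possibly composing $G$ with the permutation matrix $P$ of Remark \ref{rem:th1}, there is no loss of generality in assuming $\det(L_i)>0$ for all $i$; indeed $P$ is a linear homeomorphism, so $G$ is a Lipschitz homeomorphism iff $P\circ G$ is, and $P\circ G$ is again strongly piecewise linear (with cones $C_i$ and linear pieces $PL_i$, whose determinants are $-\det(L_i)<0$... wait, that flips the sign the wrong way) — more carefully, I would simply treat the two cases $\det(L_i)>0$ for all $i$ and $\det(L_i)<0$ for all $i$ in parallel, the second reducing to the first via $P$ exactly as in Remark \ref{rem:th1}. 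So assume $\det(L_i)>0$ throughout.

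Next, the hypothesis that $G^{-1}(q)$ is a singleton contained in at most two of the cones $C_i$ lets me invoke the second part of Lemma \ref{thm.1}, giving $\deg(G)=1$. Then I would verify that $G$ is a $PC^1$ function in a neighbourhood of the origin (it is even globally a continuous selection of the linear — hence $C^1$ — functions $L_1,\dots,L_n$, so this is immediate from the definition), so that the Pac-Rockafellar machinery applies at $x_0=0$. The generalized Jacobian $\J(G,0)$ is, by \eqref{eq.genjac}, the set $\{\nabla(L_i\cdot)(0):i\in\bar{\mathcal I}(0)\} = \{L_i : i\in\bar{\mathcal I}(0)\}$; since every cone $C_i$ has nonempty interior, $\bar{\mathcal I}(0)=\{1,\dots,n\}$, so $\J(G,0)=\{L_1,\dots,L_n\}$, a finite set of matrices all of whose determinants have the same nonzero sign (here, positive). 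With $\deg(G,U_0,0)$ well-defined — which holds because $G^{-1}(0)=\{0\}$, as noted in the excerpt — and equal to $\deg(G)=1$ for any neighbourhood $U_0$ of $0$, Theorem \ref{th.4pr96} yields that $G$ is a local Lipschitz homeomorphism at $0$: there is an open neighbourhood $U$ of $0$ on which $G|_U:U\to G(U)$ is a homeomorphism with Lipschitz inverse.

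Finally, I would apply Lemma \ref{lem.locglob}: since $G|_U:U\to G(U)$ is invertible with continuous inverse, $G$ is globally invertible and $G^{-1}$ is a continuous strongly piecewise linear map. It remains only to remark that a continuous strongly piecewise linear map is automatically locally Lipschitz (each linear piece is Lipschitz, and on the common faces the pieces agree, so a standard patching argument on the finitely many cones gives a global Lipschitz constant), so $G^{-1}$ is Lipschitz and $G$ is a Lipschitz homeomorphism, as claimed. For the case $\det(L_i)<0$ for all $i$, one applies the above to $P\circ G$ (noting that $(P\circ G)^{-1}(Pq)=G^{-1}(q)$ is still a singleton in at most two cones and $\det(PL_i)>0$), concludes $P\circ G$ is a Lipschitz homeomorphism, and hence so is $G=P^{-1}\circ(P\circ G)$.

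I do not anticipate a genuine obstacle here; the result is essentially a packaging of Lemma \ref{thm.1}, Theorem \ref{th.4pr96} and Lemma \ref{lem.locglob}. The one point requiring a little care is the bookkeeping that makes Theorem \ref{th.4pr96} applicable — namely checking that $\deg(G,U_0,0)=1$ for \emph{small} neighbourhoods $U_0$ of $0$ (not just $\deg(G)=1$ on all of $\R^k$), which follows from positive homogeneity of $G$ together with $G^{-1}(0)=\{0\}$ and the excision property of the degree.
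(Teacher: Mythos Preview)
Your proposal is correct and follows essentially the same route as the paper: use Lemma~\ref{thm.1} (together with Remark~\ref{rem:th1} for the negative-determinant case) to get $\deg(G)=\pm 1$, then apply Theorem~\ref{th.4pr96} for local invertibility at $0$ and Lemma~\ref{lem.locglob} to upgrade to a global Lipschitz homeomorphism. The paper's proof is a two-line version of exactly this argument; your extra detail (computing $\J(G,0)$ via~\eqref{eq.genjac}, and invoking excision plus homogeneity to pass from $\deg(G)$ to $\deg(G,U_0,0)$) just fills in the verifications the paper leaves implicit.
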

\begin{proof}
  Lemma \ref{thm.1} and Remark \ref{rem:th1} imply that $\deg(G)=\pm
  1$. The assertion follows from Theorem \ref{th.4pr96} and Lemma
  \ref{lem.locglob}.
\end{proof}

\begin{remark}
  The condition in Theorem \ref{thm:linnpezzi} concerning the
  existence of a point $q$ whose preimage is a singleton belonging to
  at most two polyhedral cones, is equivalent to the existence of a
  half-line at the origin whose preimage is a single half-line. In
  fact, as a consequence of Theorem \ref{thm:linnpezzi}, one has that
  if the determinants $\det(L_i)$ have constant sign for all
  $i=1,\ldots, n$ the existence of such a half-line implies that all
  the half-lines at the origin must have the same property.
\end{remark}

\begin{remark}\label{rem:2p}
  Observe that the only nontrivial (i.e.\ such that are not reducible
  to linear maps) continuous strongly piecewise linear maps with $n=2$
  are those in which the cones are half-spaces. In fact, two linear
  endomorphisms of $\R^k$ that agree on two linearly independent
  vectors, necessarily coincide. Hence, when $n=2$, it is sufficient
  to consider the case when the two nontrivial cones are
  half-spaces. This has already been done in Proposition
  \ref{lem:lemalg}.
\end{remark}

The point $q$ in Theorem \ref{thm:linnpezzi} may be difficult to
determine if the linear maps $L_i$'s are given in a complicate
way. However, in some cases, invertibility of continuous nondegenerate
strongly piecewise linear maps can be deduced merely from their
nondifferentiability structure. The easiest nontrivial case, i.e.\
when $n=2$, has already been treated (Proposition \ref{lem:lemalg}) in
arbitrary dimension just by means of linear algebra. The other cases,
$n=3$ and $n=4$, will be investigated in dimension $k=2$ only.

\medskip We are now in a position to state our main result concerning
the invertibility of continuous strongly piecewise linear maps in
$\R^2$.

\begin{theorem}\label{th.main}
  Let $G:\R^2\to\R^2$ be as in Definition \ref{def.pwlin}. We have
  that, if one of the following conditions holds:
  \begin{enumerate}
  \item $n\in\{1,2,3\}$;
  \item $n=4$ and all the cones are convex;
  \end{enumerate}
  then $G$ has a continuous piecewise linear inverse.
\end{theorem}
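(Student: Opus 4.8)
The plan is to reduce the whole statement to the single assertion $\deg(G)=\pm1$ and then to extract invertibility together with the piecewise linear inverse from Theorem~\ref{th.4pr96} and Lemma~\ref{lem.locglob}. Two harmless reductions come first: since $G$ is as in Definition~\ref{def.pwlin} it is in particular nondegenerate (which is needed anyway, cf.\ Proposition~\ref{prop.inv}), and composing on the left with the reflection $P$ of Remark~\ref{rem:th1} when necessary — an operation that does not affect whether $G$ has a piecewise linear inverse — we may assume $\det L_i>0$ for all $i$. The cases $n=1$ ($G=L_1$ a linear isomorphism) and $n=2$ (by Remark~\ref{rem:2p} two complementary half-planes, handled by Proposition~\ref{lem:lemalg}) are then immediate; but in fact all cases $n\le4$ — with the cones convex, automatically so for $n\le3$ and by hypothesis for $n=4$ — fit into a single argument based on the formula $\deg(G)=\frac{1}{2\pi}\int_{G(S^1)}\omega$ of \eqref{fintegrale}, i.e.\ on the fact that $\deg(G)$ is the winding number about $0$ of the closed curve $\sigma(t)=G\bigl(\cos2\pi t,\sin2\pi t\bigr)$, $t\in[0,1]$.

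Next I would compute this winding number. Order the cones cyclically, $C_1,\dots,C_n$, so that $C_i$ and $C_{i+1}$ (indices mod $n$) share a boundary ray; then $\sigma$ is the concatenation of $n$ elliptic arcs, the $i$-th being $L_i$ applied to the circular arc $C_i\cap S^1$, and the whole curve avoids $0$ because each $L_i$ is invertible. On the $i$-th arc one computes $\frac{d}{dt}\arg\sigma(t)=2\pi\,\det(L_i)/\abs{\sigma(t)}^2>0$, the numerator being the determinant of $L_i$ times that of a rotation; hence a continuous determination of $\arg\sigma$ is strictly increasing on all of $[0,1]$, so its total increment $2\pi\deg(G)$ is positive and $\deg(G)\ge1$. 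For the opposite inequality convexity enters: since $C_i$ is a sector of angle $\theta_i\le\pi$, the arc $C_i\cap S^1$ lies in a closed half-plane through the origin, so its $L_i$-image lies in a closed half-plane through the origin and misses $0$, and any curve with monotone argument inside such a punctured half-plane sweeps a total angle $\le\pi$. Therefore $2\pi\deg(G)\le n\pi\le4\pi$, so $\deg(G)\in\{1,2\}$.

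It remains to rule out $\deg(G)=2$. When $n\le3$ this is automatic, since $2\pi\cdot2=4\pi>n\pi$. When $n=4$, $\deg(G)=2$ would force each of the four arcs to contribute exactly $\pi$; but equality in the half-plane estimate forces the two endpoints of the $i$-th image arc to be antipodal, hence (applying $L_i^{-1}$) the two bounding rays of $C_i$ to be antipodal, hence $\theta_i=\pi$ for every $i$ — impossible since $\sum_i\theta_i=2\pi$. Thus $\deg(G)=1$ in all cases. To finish: $G$ is $PC^1$ with selection functions $x\mapsto L_i x$, and since $0\in C_i=\mathrm{cl\,int}\,C_i$ for each $i$, formula \eqref{eq.genjac} (with $\nabla f_i(0)=L_i$ and every index active at $0$) gives $\J(G,0)=\{L_1,\dots,L_n\}$, a family of matrices whose determinants share a nonzero sign; moreover $G^{-1}(0)=\{0\}$, so by excision $\deg(G,U_0,0)=\deg(G)=1$ for every bounded open $U_0\ni0$. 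Theorem~\ref{th.4pr96} then makes $G$ a local Lipschitz homeomorphism at $0$, and Lemma~\ref{lem.locglob} upgrades this to: $G$ is globally invertible with a continuous strongly piecewise linear — in particular continuous piecewise linear — inverse.

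I expect the main obstacle to be exactly the upper bound together with its equality discussion in the last two paragraphs: this is the only place where convexity of the cones is used, and it is precisely what fails for non-convex cones when $n=4$, and for five or more slices, where one extra arc is enough to let the winding number reach $2$ (cf.\ Examples~\ref{ex.ninv4p} and~\ref{ex.5pezzi}).
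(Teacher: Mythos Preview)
Your overall strategy---reduce everything to $\deg(G)=1$ and then invoke Theorem~\ref{th.4pr96} together with Lemma~\ref{lem.locglob}---is exactly the paper's. The execution differs in two respects: for the lower bound $\deg(G)\ge1$ you compute $\tfrac{d}{dt}\arg\sigma(t)=2\pi\det L_i/\abs{\sigma(t)}^2>0$ directly and use monotonicity of the argument, whereas the paper appeals to Lemma~\ref{thm.1} (preimage counting via \eqref{defgrado}); for the upper bound you use a non-strict half-plane estimate $\le\pi$ per arc and then dispose of the equality case by the antipodal-endpoint argument, whereas the paper extracts a \emph{strict} $<\pi$ from Lemma~\ref{lem.stimaint} (via Lemma~\ref{lem.stimangoli}) and never meets the boundary. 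Both routes are sound, and your monotone-argument lower bound is pleasantly self-contained.

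There is, however, one genuine gap. Your parenthetical ``with the cones convex, automatically so for $n\le3$'' is false for $n=3$: one of the three sectors may well have opening $>\pi$. (You may have been led to this by a literal reading of ``intersection of half-spaces'' in Definition~2.1, but the paper clearly intends to allow non-convex sectors---this is precisely why Theorem~\ref{th.main}(ii) adds the word \emph{convex} for $n=4$, why Lemma~\ref{lem.deg3p} splits into two sub-cases, and why Example~\ref{ex.ninv4p} is meaningful.) On a non-convex sector your half-plane bound $\le\pi$ no longer applies, and as written the argument does not close. The repair is easy and is exactly what the paper does in the second half of Lemma~\ref{lem.deg3p}: if $C_1$ has opening $>\pi$ then $C_2,C_3$ are strictly convex, so their image-arcs sweep $<\pi$ each, while the arc over $C_1$ lies in $L_1(C_1)\subsetneq\R^2$ and, having monotone argument, sweeps $<2\pi$; the total is $<4\pi$ and $\deg(G)=1$ follows.
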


Before we provide the proof of this result, we show with two examples
that the assumptions of Theorem \ref{th.main} are, to some extent,
sharp.

\begin{figure}[ht!]
  \definecolor{grigio0}{gray}{0.9} \definecolor{grigio1}{gray}{0.8}
  \definecolor{grigio2}{gray}{0.7} \definecolor{grigio3}{gray}{0.6}
  \definecolor{grigio4}{gray}{0.5}

  \centering
  \begin{pspicture}(-2.5, -3)(9.2,3)
    \uput{0}[0](3.3,0.5){\includegraphics[width=5.8cm,angle=0]{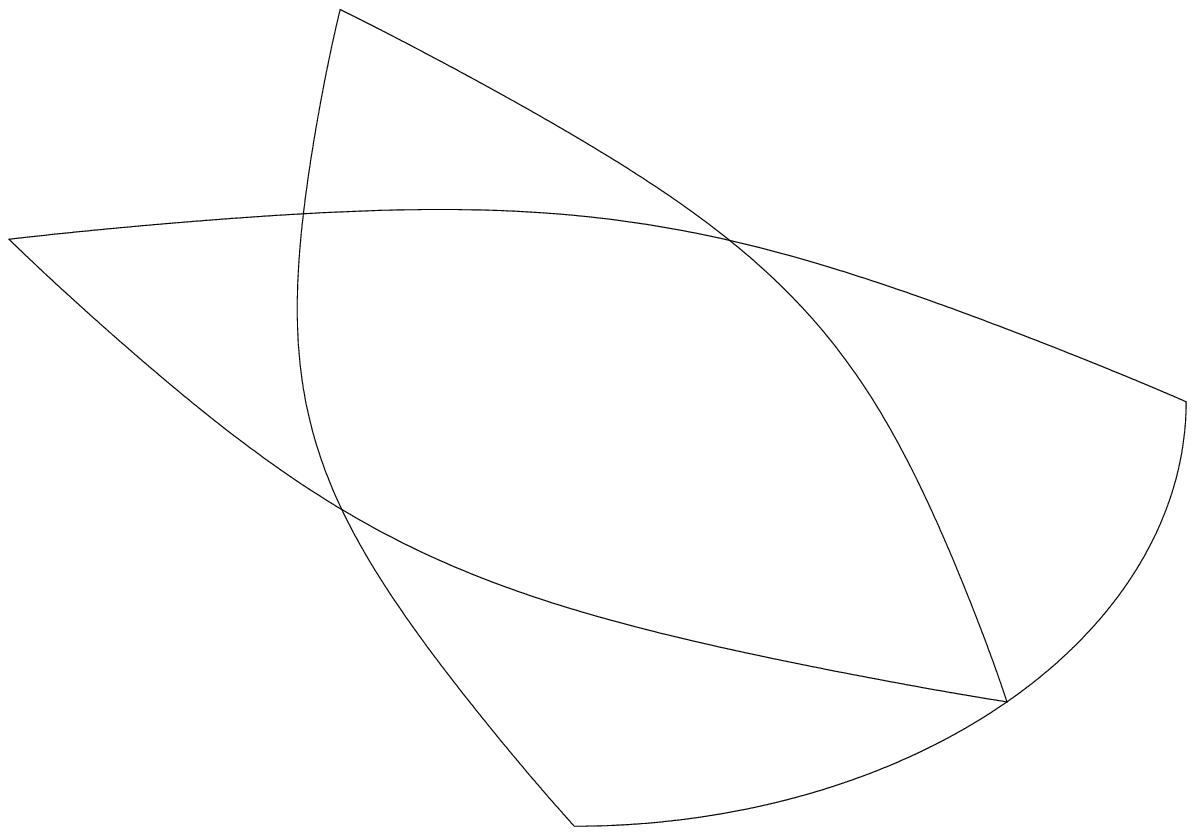}}
    \psline[linewidth=0.5pt]{->}(6.1, -1.8)(6.1, 2.8)
    \psline[linewidth=0.5pt]{->}(3.2, 0.55)(9.2, 0.55)
    \uput[u](3.45,1.35){$G(P_2)$} \uput[u](8.95,0.55){$G(P_1)$}
    \uput[dr](8.1,-0.8){$G(P_3)$} \uput[u](5,2.4){$G(P_4)$}
    \uput[dr](6.1,-1.3){$G(P_5)$}

    \pscurve[linewidth=0.8pt]{->}(2.2,-1.4)(3.4,-1.2)(4.3,-0.3)
    \uput[dr](3.4,-1.2){$G$}

    \psline[linewidth=0.5pt]{->}(-2.5, 0)(2.5, 0) %
    \psline[linewidth=0.5pt]{->}(0,-2.5)(0,2.5) \SpecialCoor
    \rput(0,0){ \psline[linestyle=dashed,linewidth=1.3pt](0,0)(2;67.5)
      \psline[linestyle=dashed,linewidth=1.3pt](0,0)(2;135)
      \psline[linestyle=dashed,linewidth=1.3pt](0,0)(2;202.5)
      \psline[linestyle=dashed,linewidth=1.3pt](0,0)(2;270)
      \psline[linestyle=dashed,linewidth=1.3pt](0,0)(2;0)
      \pscircle[linewidth=0.8pt](0,0){2} \uput[dr](0.6,-0.6){$C_5$}
      \uput[ur](0.8,0.4){$C_1$} \uput[u](-0.27,0.8){$C_2$}
      \uput[ul](-0.8,0){$C_3$} \uput[dl](-0.4,-0.7){$C_4$}

      \uput[ur](0.8,1.8){$P_2$} \uput[ur](2.1,0){$P_1$}
      \uput[u](-1.5,1.4){$P_3$} \uput[dl](-1.8,-0.8){$P_4$}
      \uput[dr](0,-2){$P_5$} }
  \end{pspicture}
  \caption{The image of $S^1$ under $G$ in Example
    \ref{ex.5pezzi}. For clarity's sake, the radial distance of
    $G(S^1)$ from $(0,0)$ has been rescaled.}\label{fig.ninv5p}
\end{figure}

Our first example shows that for $n>4$ there are $G$'s as above that
are not invertible even if the cones are convex.
\begin{example}\label{ex.5pezzi}
  Consider a nondegenerate continuous piecewise linear map $G\colon
  \R^2\to \R^2$ defined as in Definition \ref{def.pwlin} by
  \begin{gather*}
    L_1 =
    \begin{pmatrix}
      1 & -\sqrt{2} \\
      0 & \sqrt{2} - 1
    \end{pmatrix}
    \quad L_2 =
    \begin{pmatrix}
      -\sqrt{2} & -\sqrt{2}  + 1 \\
      1 & 0
    \end{pmatrix} \\
    L_3=
    \begin{pmatrix}
      0 & 1 \\
      -\sqrt{2} + 1 & - \sqrt{2}
    \end{pmatrix}
    \qquad L_4 =
    \begin{pmatrix}
      \sqrt{2} -1 & 0 \\
      -\sqrt{2} & 1
    \end{pmatrix}\quad
    L_5 =
    \begin{pmatrix}
      1 & 0 \\
      0 & 1
    \end{pmatrix}
  \end{gather*}
  where the corresponding cones are given, in polar coordinates, by
  the pairs $(\rho,\theta)$ with arbitrary $\rho$'s and $\theta$
  chosen as in the following table:
  \begin{center}
    \begin{tabular}{|c|c|c|c|c|}
      \hline
      \rule{0pt}{4mm}$C_1$ & $C_2$ & $C_3$ & $C_4$ & $C_5$\\
      \hline
      \rule[-2mm]{0pt}{7mm}
      $0\leq\theta\leq\frac{3}{8}\pi$ & 
      $\frac{3}{8}\pi\leq\theta\leq\frac{3}{4}\pi$ & $\frac{3}{4}\pi\leq\theta\leq \frac{9}{8}\pi$ &
      $\frac{9}{8}\pi\leq\theta\leq \frac{3}{2}\pi$ & $\frac{3}{2}\pi\leq\theta\leq 2\pi$ \\
      \hline
    \end{tabular}
  \end{center}
  This map is illustrated in Figure \ref{fig.ninv5p}. As the picture
  suggests, the above defined map $G$ is not invertible because it is
  not injective.
\end{example}

Our second example shows an instance of noninvertible $G$ with $n=4$
and one nonconvex cone.

\begin{example}\label{ex.ninv4p}
  Consider $G\colon \R^2\to \R^2$ as in Definition \ref{def.pwlin},
  with
  \[
  L_1 =
  \begin{pmatrix}
    1 & 0 \\
    0 & 1
  \end{pmatrix}, \quad L_2 =
  \begin{pmatrix}
    1 & 0 \\
    2\sqrt{3}& 1
  \end{pmatrix}, \quad L_3 =
  \begin{pmatrix}
    -2 & -\sqrt{3} \\
    -\sqrt{3} & -2
  \end{pmatrix}, \quad L_4=
  \begin{pmatrix}
    1 & 2\sqrt{3} \\ 0 & 1
  \end{pmatrix},
  \]
  and the cones are given, in polar coordinates, by the pairs
  $(\rho,\theta)$ with arbitrary $\rho$'s and $\theta$ chosen as in
  the following table:
  \begin{center}
    \begin{tabular}{|c|c|c|c|}
      \hline
      \rule{0pt}{4mm}$C_1$ & $C_2$ & $C_3$ & $C_4$\\
      \hline
      \rule[-2mm]{0pt}{7mm}
      $0\leq\theta\leq\frac{\pi}{2}$ & $\frac{\pi}{2}\leq\theta\leq\frac{2}{3}\pi$ & 
      $\frac{2}{3}\pi\leq\theta\leq\frac{11}{6}\pi$ & $\frac{11}{6}\pi\leq\theta\leq 2\pi$\\
      \hline
    \end{tabular}
  \end{center}
  Figure \ref{fig.ninv4p} illustrates this map. As the picture
  suggests, $G$ defined as above is not injective and therefore it is
  not invertible.
\end{example}

\begin{figure}[ht!]
  \centering
  \begin{pspicture}(-2.5, -3)(9.2,3)
    \uput{0}[0](3.55,0.85){\includegraphics[width=5.8cm,angle=0]{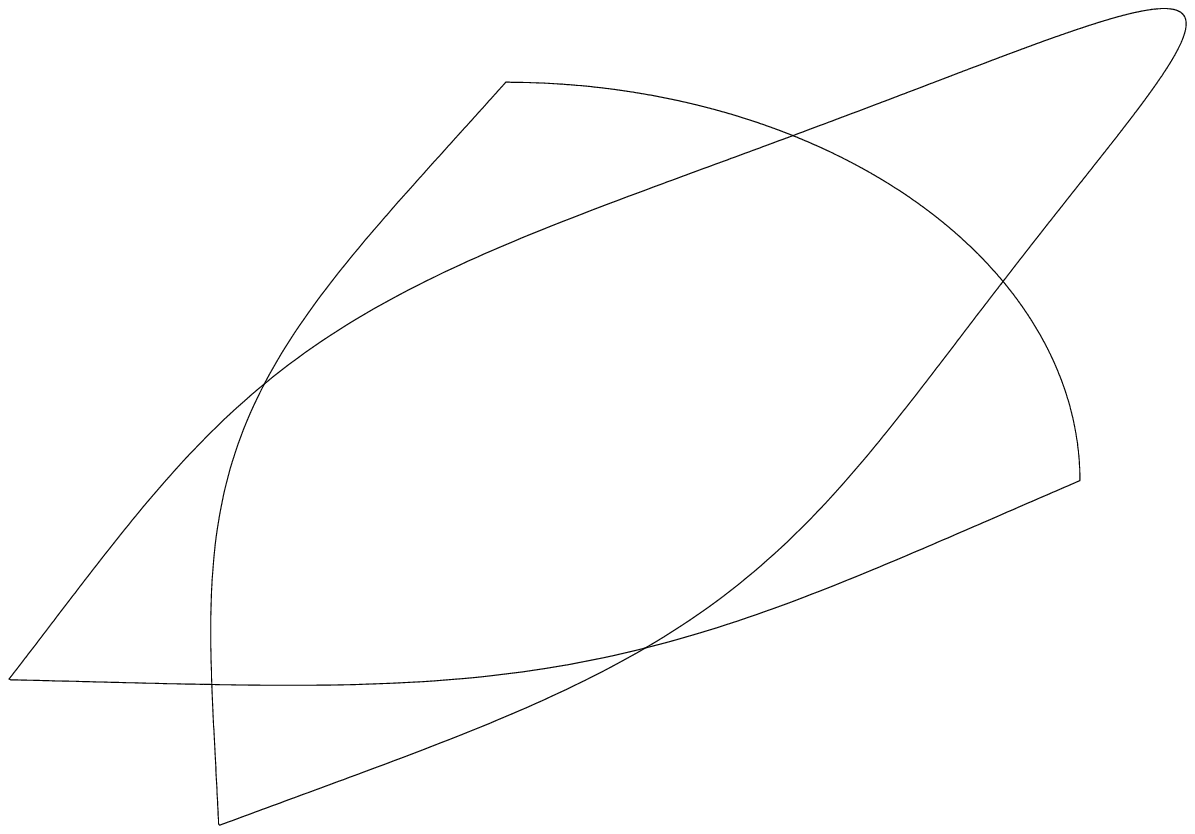}}
    \psline[linewidth=0.5pt]{->}(6, -1.8)(6, 2.8)
    \psline[linewidth=0.5pt]{->}(3.2, 0.55)(9.2, 0.55)
    \uput[ul](6,2.4){$G(P_2)$} \uput[dr](8.7,0.6){$G(P_1)$}
    \uput[d](4.7,-1.05){$G(P_3)$} \uput[dl](4,-0.3){$G(P_4)$}

    \pscurve[linewidth=0.8pt]{->}(2,-1.7)(3,-1.6)(4.2,-0.7)
    \uput[dr](3,-1.6){$G$}

    \psline[linewidth=0.5pt]{->}(-2.5, 0)(2.5, 0) %
    \psline[linewidth=0.5pt]{->}(0,-2.5)(0,2.5) \SpecialCoor
    \rput(0,0){ \psline[linestyle=dashed,linewidth=1.3pt](0,0)(2;120)
      \psline[linestyle=dashed,linewidth=1.3pt](0,0)(2;330)
      \psline[linestyle=dashed,linewidth=1.3pt](0,0)(2;90)
      \psline[linestyle=dashed,linewidth=1.3pt](0,0)(2;0)
      \pscircle[linewidth=0.8pt](0,0){2} \uput[ur](0.8,0.8){$C_1$}
      \uput[u](-0.35,1.2){$C_2$} \uput[dl](-0.8,-0.8){$C_3$}
      \uput[r](1.2,-0.4){$C_4$}

      \uput[ur](0,2){$P_2$} \uput[ur](2,0){$P_1$}
      \uput[ul](-1,1.55){$P_3$} \uput[dr](1.7,-1){$P_4$} }
  \end{pspicture}
  \centering
  \caption{The image of $S^1$ under $G$ in Example
    \ref{ex.ninv4p}. For clarity's sake, the radial distance of
    $G(S^1)$ from $(0,0)$ has been rescaled.}\label{fig.ninv4p}
\end{figure}

\medskip Let us now turn to the task of proving Theorem
\ref{th.main}. The proof is done differently according to the number
of nontrivial cones in which the plane is pie-sliced. The proof, in
the cases of $n=2$, boils down to Proposition \ref{lem:lemalg} whereas
the cases $n=3$ and $n=4$ will be treated with the help of Theorem
\ref{th.5pr96}. In order to apply this theorem it is necessary to
estimate the topological degree of our map $G$. This will be done by
the means of geometric considerations. The proof of the following
lemma is based on an elementary linear algebra argument and is left to
the reader.

\begin{lemma}\label{lem.stimangoli}
  Let $A \colon \R^2\to\R^2$ be linear and nonsingular and let
  $C\in\R^2$ be a cone with vertex at the origin.  Then
  $A(C)\subseteq\R^2$ is a cone with with vertex at the origin and the
  following statements hold:
  \begin{enumerate}
  \item If $C$ does not contain a half-plane, then $A(C)$ is strictly
    convex.
  \item If $C\varsubsetneq\R^2$ contains a half-plane, then so does
    $A(C)\varsubsetneq\R^2$.
  \end{enumerate}
\end{lemma}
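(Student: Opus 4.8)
The plan is to reduce everything to one elementary fact about a linear isomorphism $A$ of the plane: it carries half-planes through the origin onto half-planes through the origin. Indeed, if $H_v=\{x\in\R^2:\scal{v}{x}\ge 0\}$ with $v\neq 0$, the substitution $x=A^{-1}y$ gives
\[
A(H_v)=\{y\in\R^2:\scal{(A^{-1})^t v}{y}\ge 0\}=H_{(A^{-1})^t v},
\]
and $(A^{-1})^t v\neq 0$ since $A$ is invertible; in the same way $A$ sends every line through the origin onto a line through the origin. Before using this I would clear away the structural part of the statement. From $A(\lambda x)=\lambda A(x)$ it is immediate that $A(C)$ is again a union of rays issuing from the origin, hence a cone with vertex at $0$; and, $A$ being a linear isomorphism, hence a homeomorphism of $\R^2$, the set $A(C)$ is closed and has nonempty interior precisely when $C$ does. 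It therefore suffices to treat the situation relevant to the paper, in which $C$ is a closed angular sector with vertex at the origin and nonempty interior; denote by $\theta\in(0,2\pi]$ its aperture.

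For assertion (2), suppose $C\subsetneq\R^2$ contains a half-plane $H$. Then $A(C)\supseteq A(H)$, and $A(H)$ is a half-plane by the fact recalled above, so $A(C)$ contains a half-plane; moreover $A(C)\neq\R^2$, since otherwise $C=A^{-1}\big(A(C)\big)=A^{-1}(\R^2)=\R^2$. Hence $A(C)\subsetneq\R^2$ contains a half-plane. For assertion (1), suppose $C$ contains no half-plane. Being a single angular sector with nonempty interior, $C$ must then have aperture $\theta<\pi$, for a sector of aperture at least $\pi$ contains two opposite rays and thus a half-plane. In particular $C$ is convex and contains no line through the origin, whence $A(C)$ is convex (a linear image of a convex set) and contains no line through the origin (the preimage under $A$ of such a line would be a line contained in $C$). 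A closed convex cone of $\R^2$ with nonempty interior and containing no line is a closed angular sector of aperture strictly less than $\pi$, that is, it is strictly convex; this is the claim.

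I do not anticipate a genuine obstacle here, as this is precisely the ``elementary linear algebra'' promised in the text. The one point that needs to be handled with a little care is the bookkeeping in assertion (1), namely the equivalences between ``$C$ contains no half-plane'', ``$C$ has aperture $<\pi$'' and ``$C$ is convex and contains no line'', together with the equally routine identification of the strictly convex cones of the plane with the angular sectors of aperture $<\pi$; all of this follows at once from the classification of the closed convex cones of $\R^2$. In this sense the substantive content of the lemma is the single identity $A(H_v)=H_{(A^{-1})^t v}$ displayed above.
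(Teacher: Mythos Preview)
Your proof is correct, and since the paper explicitly leaves this lemma to the reader as an ``elementary linear algebra argument,'' there is no official proof to compare against; your reduction to the identity $A(H_v)=H_{(A^{-1})^t v}$ together with the classification of closed planar sectors by aperture is exactly the sort of argument intended. The only mild caveat is that the lemma as stated does not assume $C$ is a single angular sector, but your restriction to that case is the one relevant to every application in the paper (the $C_i$'s are polyhedral cones in $\R^2$ with nonempty interior), so nothing is lost.
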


This lemma has an useful consequence:

\begin{lemma}\label{lem.stimaint}
  Let $A \colon \R^2\to\R^2$ be linear and nonsingular and let
  $C\varsubsetneq\R^2$ be a cone with vertex at the origin.  Let
  $\Gamma$ be the image of the arc $S^1\cap C$. Then,
  \begin{equation}\label{eq.stimaint}
    \left|\int_{\Gamma} \omega\;\right|\in
    \begin{cases}
      [0,\pi)  & \text{if $C$ does not contain a half-plane,}\\
      [\pi,2\pi) & \text{otherwise.}
    \end{cases}
  \end{equation} 
  In particular, we have that $\left|\displaystyle\int_{\Gamma}
    \omega\;\right|<2\pi$ and $\left|\displaystyle\int_{\Gamma}
    \omega\;\right|<\pi$ when $C$ is strictly convex.
\end{lemma}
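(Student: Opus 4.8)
The plan is to reduce the integral $\int_\Gamma\omega$ to an angular measurement and then use Lemma \ref{lem.stimangoli} to control that measurement. Recall that $\omega$ is (up to the factor $1/2\pi$) the standard angle form, so that for any arc $\Gamma$ not passing through the origin, $\int_\Gamma\omega$ equals the net variation of the polar angle along $\Gamma$. First I would note that $\Gamma=A(S^1\cap C)$ is, by Lemma \ref{lem.stimangoli}, an arc lying in the cone $A(C)$, which is again a proper cone with vertex at the origin; moreover since $A$ is a nonsingular linear map and $S^1\cap C$ is a connected arc on the circle, $\Gamma$ is a simple arc joining the two bounding rays of $A(C)$ and staying inside $A(C)$. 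The key geometric observation is that the polar angle, restricted to points of a proper cone, can be chosen to be a continuous single-valued function with total oscillation equal to the aperture $\alpha$ of the cone, where $\alpha\in[0,2\pi)$ (here $\alpha<\pi$ exactly when the cone is strictly convex, $\alpha=\pi$ for a half-plane, and $\alpha\in(\pi,2\pi)$ when the cone properly contains a half-plane). Since $\Gamma$ lies in $A(C)$ and has endpoints on its two boundary rays, the net angular variation along $\Gamma$ is at most $\alpha$ in absolute value; hence $\bigl|\int_\Gamma\omega\bigr|\le\alpha<2\pi$.

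To get the sharper dichotomy in \eqref{eq.stimaint}, I would combine this bound with case (1) of Lemma \ref{lem.stimangoli}: if $C$ does not contain a half-plane then neither does $A(C)$ (indeed $A(C)$ is strictly convex), so its aperture $\alpha$ satisfies $\alpha<\pi$, giving $\bigl|\int_\Gamma\omega\bigr|<\pi$. If instead $C$ contains a half-plane, then by case (2) so does $A(C)\varsubsetneq\R^2$, whence $\alpha\in[\pi,2\pi)$; the upper bound $\bigl|\int_\Gamma\omega\bigr|<2\pi$ is immediate, and for the lower bound $\bigl|\int_\Gamma\omega\bigr|\ge\pi$ I would use that the two bounding rays of a cone containing a half-plane subtend an angle of at least $\pi$, and $\Gamma$ connects a point on one ray to a point on the other while remaining in the cone, so the angular sweep is at least that of the boundary rays, i.e.\ at least $\pi$. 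This yields exactly the two cases in \eqref{eq.stimaint}, and the final ``in particular'' sentence is just a restatement of the two bounds.

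The main obstacle, and the point that needs care rather than cleverness, is making precise the claim that ``the net angular variation of a simple arc joining the two boundary rays of a proper cone and lying inside that cone is bounded in absolute value by the cone's aperture, and is at least $\pi$ when the cone contains a half-plane.'' One clean way to see this: parametrize $\Gamma$ by $t\in[0,1]$, lift the polar angle to a continuous function $\theta(t)$ (possible since $\Gamma$ avoids $0$), and observe that $\theta(t)$ takes values in a translate of the interval describing $A(C)$, so $\sup\theta-\inf\theta\le\alpha$, while $|\theta(1)-\theta(0)|\le\sup\theta-\inf\theta$; for the lower bound, note $\theta(0)$ and $\theta(1)$ lie on the two distinct boundary rays, which differ by $\pm\alpha\pmod{2\pi}$, and the confinement of $\theta$ to an interval of length $\alpha$ forces $|\theta(1)-\theta(0)|$ to equal $\alpha\ge\pi$ when $\alpha\ge\pi$ — actually, when $\alpha\geq\pi$ the two endpoints cannot be reached by a ``short way around,'' so $|\theta(1)-\theta(0)|=\alpha$ exactly. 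I expect the write-up to spend most of its length pinning down this lifting argument; everything else is a direct appeal to Lemma \ref{lem.stimangoli}.
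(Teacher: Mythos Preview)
Your approach is correct and is essentially the same as the paper's: both arguments rest on the fact that $\omega=d\theta$ admits a single-valued primitive on the proper cone $A(C)$ (equivalently, on $\R^2\setminus s$ for a ray $s$ missing $A(C)$), so that $\int_\Gamma\omega$ reduces to the angular difference of the endpoints $A(P_1),A(P_2)$, i.e.\ the aperture of $A(C)$, after which Lemma~\ref{lem.stimangoli} finishes. The paper phrases this via exactness and path-independence (replacing $\Gamma$ by a circular-arc-plus-radial path), which lets it read off $\bigl|\int_\Gamma\omega\bigr|=\alpha$ directly and avoids your intermediate $\sup\theta-\inf\theta$ bound and the separate lower-bound discussion; you may find that route shorter to write up.
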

\begin{proof}
  Observe first that by Lemma \ref{lem.stimangoli} there exists a
  half-line $s$ starting at the origin that does not intersect
  $A(C)$. Clearly the differential form $\omega$ is exact in
  $\R^2\setminus s$.  Let $P_1$ and $P_2$ be the intersections of
  $\partial C$ with $S^1$. The path integral that appears in
  \eqref{eq.stimaint} does not depend on the chosen path connecting
  $A(p_1)$ and $A(p_2)$. With the choice of an appropriate path, for
  instance, the concatenation of a circular arc of radius
  $\abs{A(P_2)}$ from $A(P_2)$ with the radial segment through
  $A(P_1)$ (see Figure \ref{fig.intcurv}), it is not difficult to show
  that $|\int_{\Gamma} \omega |$ is merely the angular distance (we
  consider the angle that does not contain the half-line $s$) between
  $A(p_1)$ and $A(p_2)$ as seen from the origin. The assertion now
  follows from Lemma \ref{lem.stimangoli}.
\end{proof}

\begin{figure}[ht!]
  \centering
  \begin{pspicture}(-7.5, -2)(3.5,2)
    \rput(-5.3,0){ \pscircle[linewidth=0.3pt,linestyle=dashed](0,0){1}
      \pswedge[fillstyle=solid,fillcolor=lightgray,linestyle=none](0,0){1.45}{10}{160}
      \psarc[linewidth=0.3pt,linestyle=solid](0,0){1}{10}{160}
      \rput(0.984,0.173){\psdot}

      \uput[u](1.104,0.203){$P_1$} \rput(-0.939,0.342){\psdot}
      \uput[dl](-0.939,0.442){$P_2$} \SpecialCoor
      \psline[linewidth=0.5pt,linestyle=dashed](0, 0)(2 ; 10) %
      \psline[linewidth=0.5pt,linestyle=dashed](0,0)(2 ; 160)
      \psline[linewidth=0.5pt]{->}(-2, 0)(2, 0) %
      \psline[linewidth=0.5pt]{->}(0,-1.2)(0,2)
      \uput[ur](-0.5,1.4){$C$} \uput[dl](-0.4,-0.8){$S^1$} }
    \psline{->}(-4,-1.3)(-2.1,-1.3) \uput[d](-3,-1.3){$A$}
    \rput(-1.2,-1){ \psline[linewidth=0.5pt]{->}(-1, 0)(5, 0) %
      \psline[linewidth=0.5pt]{->}(0,-1)(0,3)
      \psarc[linewidth=0.7pt](0,0){2}{20}{80}
      \pscurve[linecolor=gray](0.347,1.969)(2.12,2.12)(3.758,1.368)
      \psline[linewidth=0.7pt,linestyle=dashed](0,0)(0.347,1.969)
      \psline[linewidth=0.7pt,linestyle=dashed](0,0)(3.758,1.368)
      \psline[linewidth=0.7pt,linestyle=solid](1.879,0.684)(3.758,1.368)
      \psline[linewidth=0.8pt,linestyle=dashed](0,0)(-1.1,2.2)
      \uput[u](-1,1){$s$} \uput[ur](3.758,1.348){$A(P_1)$}
      \rput(3.758,1.368){\psdot} \uput[u](0.487,1.969){$A(P_2)$}
      \rput(0.347,1.969){\psdot} \uput[ur](2.12,2.12){$\Gamma$}
      \psarc[linewidth=0.3pt,linestyle=dashed]{<->}(0,0){0.6}{20}{80}
      \uput[ur](0.34,0.45){\scriptsize $|\int_{\Gamma} \omega |$}
      \uput[dr](0.9,0.45){\scriptsize $|A(P_2)|$}
      \psline[linewidth=0.4pt,linestyle=solid,linecolor=gray]{<->}(1.5,1.4)(3,0.6)(2.9,1)
      \uput[d](3.4,0.6){\scriptsize integration path} }
  \end{pspicture}
  \caption{The integration path in Lemma
    \ref{lem.stimaint}}\label{fig.intcurv}
\end{figure}
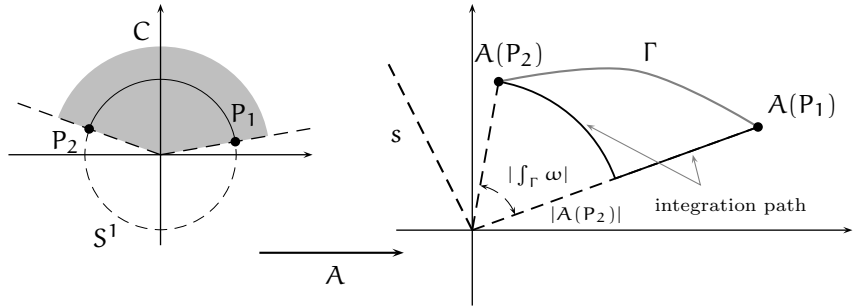

\begin{lemma}\label{lem.deg3p}
  Let $G$ be as in Theorem \ref{th.main} with $n=3$ and $\det L_i >0$,
  $\forall i=1, 2, 3$. Then, $\deg(G)=1$.
\end{lemma}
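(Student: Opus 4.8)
The plan is to evaluate $\deg(G)$ by means of the integral formula \eqref{fintegrale}, $2\pi\deg(G)=\int_{G(S^1)}\omega$, and to estimate the right-hand side arc by arc. First I would observe that $S^1$ is the union of the three arcs $A_i:=S^1\cap C_i$, $i=1,2,3$, so that $G(S^1)$ is obtained by concatenating the three arcs $\Gamma_i:=G(A_i)=L_i(A_i)$; these glue continuously at the bounding rays of the cones because $G$ is continuous (recall that $L_ix=L_jx$ on $C_i\cap C_j$). Since each $L_i$ is nonsingular and $0\notin A_i$, none of the $\Gamma_i$ meets the origin, so
\[
2\pi\,\deg(G)=\int_{G(S^1)}\omega=\sum_{i=1}^{3}\int_{\Gamma_i}\omega,
\]
where each $\Gamma_i$ is traversed once, in the orientation induced by the counterclockwise parametrization of $S^1$.

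Next I would bound each term via Lemma \ref{lem.stimaint}. Each $C_i$ is a proper angular sector of opening $\theta_i>0$, with $\theta_1+\theta_2+\theta_3=2\pi$; a sector contains a half-plane exactly when its opening is at least $\pi$, so, the three openings being positive with sum $2\pi$, \emph{at most one} of the $C_i$ can contain a half-plane. Hence, by Lemma \ref{lem.stimaint}, at least two of the integrals $\int_{\Gamma_i}\omega$ have absolute value strictly less than $\pi$, and the remaining one has absolute value strictly less than $2\pi$, so that
\[
2\pi\,\bigl|\deg(G)\bigr|=\Bigl|\sum_{i=1}^{3}\int_{\Gamma_i}\omega\Bigr|\le\sum_{i=1}^{3}\Bigl|\int_{\Gamma_i}\omega\Bigr|<\pi+\pi+2\pi=4\pi.
\]
Therefore $\bigl|\deg(G)\bigr|<2$, i.e.\ $\deg(G)\in\{-1,0,1\}$.

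To finish, since $\det L_i>0$ for every $i$, Lemma \ref{thm.1} yields $\deg(G)>0$; together with the preceding estimate this forces $\deg(G)=1$. The one point requiring care is the counting argument in the second paragraph: the crude estimate $\bigl|\int_{\Gamma_i}\omega\bigr|<2\pi$, valid for every cone, would only give $\bigl|\deg(G)\bigr|<3$ and hence leave open the possibility $\deg(G)=2$; it is essential to use that three sectors tiling the plane cannot all be ``large'', so that at least two of the $\Gamma_i$ are images of strictly convex cones and each of them contributes strictly less than a half-turn.
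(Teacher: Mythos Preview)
Your argument is correct and follows the same route as the paper's proof: split $G(S^1)$ into the three arcs $\Gamma_i$, bound each $\int_{\Gamma_i}\omega$ via Lemma~\ref{lem.stimaint}, and combine with the positivity $\deg(G)>0$ from Lemma~\ref{thm.1} to force $\deg(G)=1$. The only cosmetic difference is that the paper treats separately the case where all three cones are strictly convex (giving the sharper bound $0<\deg(G)<3/2$) and the case where one cone contains a half-plane (giving $0<\deg(G)<2$), whereas you merge the two cases by the observation that at most one sector can have opening $\ge\pi$; the content is the same.
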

\begin{proof}
  We consider the two possible cases: when all the cones are strictly
  convex and when there is one cone containing a half-plane. For
  $i=1,2,3$, let $\Gamma_i$, be the image $G(S^1\cap C_i)$. In the
  first case, by Lemma \ref{lem.stimaint} we have that
  $\int_{\Gamma_i}\omega<\pi$ for $i=1,2,3$.  Hence, by Lemma
  \ref{thm.1} and formula \eqref{fintegrale},
  \begin{equation}\label{stimad3p}
    0<\deg(G)<\frac{\pi+\pi+\pi}{2\pi}=\frac{3}{2}.
  \end{equation}
  Which, the degree being an integer, implies $\deg(G)=1$.

  In the second case, only one of the cones, say $C_1$, may contain a
  half-plane. Thus, by Lemma \ref{lem.stimaint}, we have that
  $\int_{\Gamma_1}\omega<2\pi$ and $\int_{\Gamma_i}\omega<\pi$ for
  $i=2,3$. Hence, inequality \eqref{stimad3p} becomes
  \[
  0<\deg(G)<\frac{2\pi+\pi+\pi}{2\pi}=2.
  \]
  Which, again, implies $\deg(G)=1$.
\end{proof}

\begin{lemma}\label{lem.deg4p}
  Let $G$ be as in Theorem \ref{th.main} with $n=4$ and $\det L_i >0 $
  $\forall i=1, 2, 3, 4$. Then $\deg(G)=1$.
\end{lemma}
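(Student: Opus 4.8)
The plan is to mirror the proof of Lemma \ref{lem.deg3p}, using Lemma \ref{thm.1} to get $\deg(G)>0$ (in particular $\deg(G)\geq 1$) and formula \eqref{fintegrale} together with the estimates of Lemma \ref{lem.stimaint} to bound $\deg(G)$ from above. Writing $\Gamma_i=G(S^1\cap C_i)$ for $i=1,2,3,4$, we have $2\pi\deg(G)=\int_{G(S^1)}\omega=\sum_{i=1}^4\int_{\Gamma_i}\omega$, so it suffices to show that the sum of the four angular spreads is strictly less than $4\pi$. Since all four cones $C_i$ are convex by hypothesis, none of them contains a half-plane (a convex cone that is a proper subset of $\R^2$ has opening angle $<\pi$, and indeed here the four opening angles sum to $2\pi$, so each is $<2\pi$ and at most one could even approach $\pi$). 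Hence by Lemma \ref{lem.stimaint}, $|\int_{\Gamma_i}\omega|<\pi$ for every $i$, which would naively give $0<\deg(G)<\frac{4\pi}{2\pi}=2$ and thus $\deg(G)=1$.

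The subtlety — and the step I expect to be the main obstacle — is that the bound $\frac{4\pi}{2\pi}=2$ is \emph{not} strict enough on its own: $\deg(G)<2$ does give $\deg(G)=1$ once we know $\deg(G)\geq 1$, so in fact this crude estimate already suffices, provided all four cones are \emph{strictly} convex. The real difficulty is the boundary case in which one of the convex cones is a half-plane (opening angle exactly $\pi$), which is permitted by ``all cones convex''; then Lemma \ref{lem.stimaint} only gives $|\int_{\Gamma_i}\omega|\in[\pi,2\pi)$ for that cone and the crude sum becomes $<\frac{2\pi+\pi+\pi+\pi}{2\pi}=\frac{5}{2}$, which only yields $\deg(G)\in\{1,2\}$. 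So the plan is: first dispose of the generic case (all $C_i$ strictly convex) by the argument above; then handle the case where exactly one cone, say $C_1$, is a half-plane. In that case the other three cones $C_2,C_3,C_4$ together tile the complementary half-plane and each has opening angle $<\pi$ with angles summing to $\pi$, so in particular $C_2$ and $C_4$ (the ones adjacent to $\partial C_1$) have strictly positive angle and the three angular spreads $|\int_{\Gamma_i}\omega|$, $i=2,3,4$, are each $<\pi$ but moreover their \emph{sum} is what we must control, not each term.

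To close the half-plane case I would argue more carefully using continuity of $\omega$ along $G(S^1)$ and the fact that $G$ restricted to the closed half-plane $C_1$ is the single linear map $L_1$: the image $L_1(C_1)$ is again a half-plane (Lemma \ref{lem.stimangoli}(ii) with $C=C_1$ a half-plane, here $L_1(C_1)$ is exactly a half-plane), so $\int_{\Gamma_1}\omega=\pm\pi$ exactly, not merely $<2\pi$. Thus $2\pi\deg(G)=\pm\pi+\sum_{i=2}^4\int_{\Gamma_i}\omega$ with $|\sum_{i=2}^4\int_{\Gamma_i}\omega|<\pi+\pi+\pi=3\pi$, giving $2\pi|\deg(G)|<4\pi$, i.e.\ $|\deg(G)|<2$; combined with $\deg(G)\geq1$ from Lemma \ref{thm.1} (the determinants are all positive) we again conclude $\deg(G)=1$. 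Alternatively, and perhaps more cleanly, one may simply merge the two cones $C_2\cup C_3$ or note that since $L_1$ is linear on a full half-plane one can pick $q=L_1 w$ for $w\in\inter C_1$ generic whose preimage is the single point $w$ lying in the single cone $C_1$, and then the second part of Lemma \ref{thm.1} directly gives $\deg(G)=1$ — this last observation in fact makes the half-plane case immediate and is the route I would take to keep the write-up short.

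\begin{proof}[Proof of Lemma \ref{lem.deg4p}]
  By Lemma \ref{thm.1}, $\deg(G)>0$, hence $\deg(G)\geq 1$. For
  $i=1,2,3,4$ let $\Gamma_i=G(S^1\cap C_i)$; by
  \eqref{fintegrale},
  \[
  2\pi\,\deg(G)=\int_{G(S^1)}\omega=\sum_{i=1}^4\int_{\Gamma_i}\omega.
  \]
  Since the four cones are convex and tile $\R^2$ with vertex at the
  origin, each $C_i\varsubsetneq\R^2$ is convex; in particular no
  $C_i$ contains a half-plane unless it equals a half-plane.

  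Suppose first that every $C_i$ is strictly convex. Then Lemma
  \ref{lem.stimaint} gives $\left|\int_{\Gamma_i}\omega\right|<\pi$ for
  $i=1,2,3,4$, whence
  \[
  0<\deg(G)<\frac{\pi+\pi+\pi+\pi}{2\pi}=2,
  \]
  which forces $\deg(G)=1$.

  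Suppose now that exactly one of the cones, say $C_1$, is a
  half-plane (at most one can be, since the opening angles of the
  $C_i$'s sum to $2\pi$). Pick $w\in\inter(C_1)$ generic and set
  $q:=L_1 w=G(w)\neq 0$. Since $G$ agrees with the linear isomorphism
  $L_1$ on the whole half-plane $C_1$ and $G(x)=L_i x$ with $L_i\neq
  L_1$ on the interior of the other cones, for generic $w$ the
  preimage $G^{-1}(q)$ is the single point $w$, which lies in the
  single cone $C_1$. By the second part of Lemma \ref{thm.1},
  $\deg(G)=1$.
\end{proof}
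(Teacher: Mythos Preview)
Your argument in the strictly convex case is correct and is exactly the paper's proof: the paper simply writes ``by Lemma \ref{lem.stimaint} we have that $\int_{\Gamma_i}\omega<\pi$ for $i=1,\ldots,4$ since all the cones are convex'', sums to get $0<\deg(G)<2$, and concludes. The paper does not separately treat the half-plane case, so you were more careful than the paper in isolating it.

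However, the written proof of the half-plane case has a genuine gap. The assertion that ``for generic $w\in\inter(C_1)$ the preimage $G^{-1}(q)$ is the single point $w$'' is not justified and is in general false. To have $G^{-1}(q)=\{w\}$ you would need $q=L_1w\notin L_i(C_i)$ for each $i=2,3,4$. But each $L_i(C_i)$ is a cone with nonempty interior and vertex at the origin, while $L_1(\inter C_1)$ is an open half-plane; there is no reason the set $L_1(\inter C_1)\setminus\bigcup_{i=2}^{4}L_i(C_i)$ should be nonempty, let alone generic. Knowing only that $L_i\neq L_1$ on $\inter(C_i)$ does not prevent $L_i(C_i)$ from meeting (or even covering part of) $L_1(\inter C_1)$. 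So the appeal to the second part of Lemma \ref{thm.1} is unfounded as written.

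The fix is the one you yourself outlined in your preamble and then discarded: when $C_1$ is a closed half-plane and $\det L_1>0$, the image $L_1(C_1)$ is again a half-plane and $\int_{\Gamma_1}\omega=\pi$ exactly. The remaining three cones are strictly convex, so $\bigl|\int_{\Gamma_i}\omega\bigr|<\pi$ for $i=2,3,4$. Hence
\[
2\pi\,\deg(G)=\pi+\sum_{i=2}^{4}\int_{\Gamma_i}\omega\in(\pi-3\pi,\ \pi+3\pi)=(-2\pi,4\pi),
\]
so $\deg(G)\in\{0,1\}$, and together with $\deg(G)\geq 1$ from Lemma \ref{thm.1} you get $\deg(G)=1$. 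Use this route in the write-up; it is short and airtight.
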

\begin{proof}
  For $i=1,\ldots,4$, let $\Gamma_i$, be the image $G(S^1\cap
  C_i)$. By Lemma \ref{lem.stimaint} we have that
  $\int_{\Gamma_i}\omega<\pi$ for $i=1,\ldots,4$ since all the cones
  are convex. Hence, by Lemma \ref{thm.1} and formula
  \eqref{fintegrale},
  \[
  0<\deg(G)<\frac{\pi+\pi+\pi+\pi}{2\pi}=2.
  \]
  Which, the degree being an integer, implies $\deg(G)=1$.
\end{proof}
\begin{remark}
  If $\det L_i <0$ $\forall i=1, \ldots, n$, composing $G$ with the
  linear maps whose matrix in the standard basis of $\R^2$ is $J =
  \begin{pmatrix}
    0 & 1 \\
    1 & 0
  \end{pmatrix}
  $, we get that $JG \colon \R^2 \to \R^2$ is one--to--one so that $G$
  is invertible as well.
\end{remark}
We are now in a position to prove our main theorem.

\begin{proof}[Proof of Theorem \ref{th.main}]

  \noindent{\bf(Case $n=1$.)} In this case $G$ is linear with nonzero
  determinant. Thus, there is nothing to prove.

  \noindent{\bf(Case $n=2$.)} See Proposition \ref{lem:lemalg} and
  Remark \ref{rem:2p}.

  \noindent{\bf(Cases $n=3$ and $n=4$.)} In both cases, it follows
  from Lemmas \ref{lem.deg3p} and \ref{lem.deg4p} that $\deg(G)=1$,
  then the assertion follows from Theorem \ref{th.4pr96}.
\end{proof}

\begin{example}
  Let us consider a decomposition of the space $\R^3$ into four convex
  wedges $C_1,\ldots,C_4$ with a common edge along the straight line
  $r$. Let $G\colon\R^3\to\R^3$ be a continuous strongly piecewise
  linear map with respect to this decomposition and with
  \[
  G(x) = L_i x, \qquad x \in C_i,\; i=1,\ldots,4
  \]
  and assume that $\det L_i$ share the same sign for
  $i=1,\ldots,4$. Then, as a consequence of Theorem \ref{th.main}, we
  have that $G$ is invertible with continuous strongly piecewise
  linear inverse. To see that, consider a plane $P$ orthogonal to
  $r$. Clearly, the restriction $G|_P$ is invertible by Theorem
  \ref{th.main}. Similarly, since $G(x_r)=L_1x_r=\ldots=L_4x_r$, for
  any point $x_r\in r$, and the $L_i$'s are isomorphisms, $G$ is
  invertible on $r$. Given any vector $y\in\R^3$, we can obtain
  $G^{-1}(y)$ by the following argument. Write $y=y_P+y_r$ where $y_P$
  and $y_r$ denote the orthogonal projections of $y$ onto $P$ and $r$,
  respectively. Then one has
  \[
  G^{-1}(y)=L_1^{-1}(y)+(G|_P)^{-1}(y).
  \]
\end{example}

We conclude this section with an example showing that our main result
(at least when $n=4$) cannot be deduced from the well-known Clarke's
Theorem. In fact, we exhibit a continuous piecewise linear map which
is invertible by our main result although it does not satisfy the
assumptions of Clarke's Theorem.
\begin{example}\label{ex.5pezzicl}
  Consider a continuous piecewise linear map $G\colon \R^2\to \R^2$
  defined as in Definition \ref{def.pwlin} by
  \[
  L_1 =
  \begin{pmatrix}
    1 & 0 \\
    0 & 1
  \end{pmatrix}, \quad L_2 =
  \begin{pmatrix}
    \frac{1}{10} & 0 \\[1mm]
    -10& 1
  \end{pmatrix}, \quad L_3 =
  \begin{pmatrix}
    \phantom{+}\frac{5}{100} & \phantom{+}\frac{5}{100} \\[2mm]
    -\frac{455}{100} & -\frac{445}{100}
  \end{pmatrix}, \quad L_4=
  \begin{pmatrix}
    1 & 1 \\[1mm]
    0 & \frac{1}{10}
  \end{pmatrix},
  \]
  where the corresponding cones are the quadrants I,\ldots,IV,
  respectively. This map is illustrated in Figure
  \ref{fig.noclarke4p}. As the picture suggests, $G$ has degree
  $1$. (The map $G$ has been found with the help of a short FORTRAN
  program that assisted us in sifting many potential examples.)
\end{example}

\begin{figure}[ht!]
  \centering
  \begin{pspicture}(-2, -3)(9.7,3)
    \rput(6.2,1){\includegraphics[width=5.8cm,angle=0]{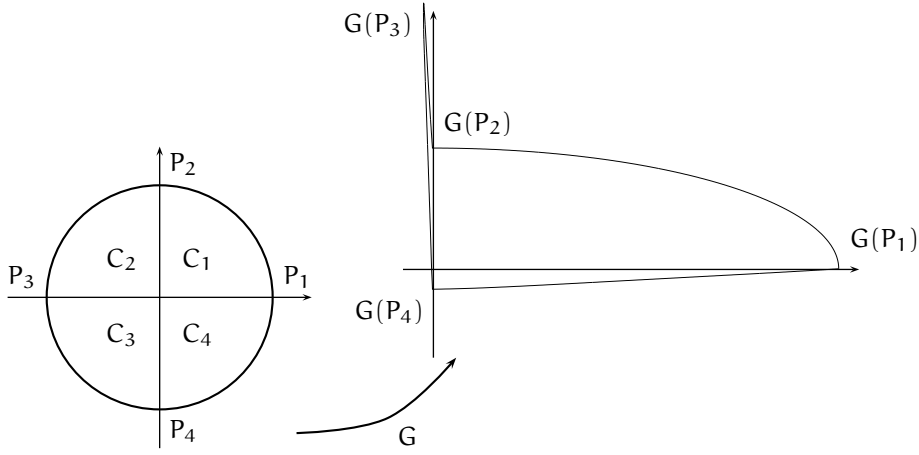}}
    \psline[linewidth=0.5pt]{->}(3.6, -1.8)(3.6, 2.8)
    \psline[linewidth=0.5pt]{->}(3.2, -0.63)(9.2, -0.63)
    \uput[ur](8.95,-0.55){$G(P_1)$} \uput[ur](3.6,1){$G(P_2)$}
    \uput[dl](3.45,2.9){$G(P_3)$} \uput[dl](3.6,-0.9){$G(P_4)$}

    \pscurve[linewidth=0.8pt]{->}(1.8,-2.8)(3,-2.6)(3.9,-1.8)
    \uput[dr](3,-2.6){$G$}

    \rput(0,-1){ \psline[linewidth=0.5pt]{->}(-2, 0)(2, 0) %
      \psline[linewidth=0.5pt]{->}(0,-2)(0,2)
      \pscircle[linewidth=0.8pt](0,0){1.5} \rput(0.5,0.5){$C_1$}
      \rput(-0.5,0.5){$C_2$} \rput(-0.5,-0.5){$C_3$}
      \rput(0.5,-0.5){$C_4$}

      \uput[ur](0,1.5){$P_2$} \uput[ur](1.5,0){$P_1$}
      \uput[ul](-1.5,0){$P_3$} \uput[dr](0,-1.5){$P_4$} }
  \end{pspicture}
%
%
%
  \caption{The image of $S^1$ under $G$ in Example
    \ref{ex.5pezzicl}. For clarity's sake, the radial distance of
    $G(S^1)$ from $(0,0)$ and the proportion between axes have been
    both altered.}\label{fig.noclarke4p}
\end{figure}


\section{Application: Piecewise differentiable functions}

We now provide some applications of the results of the previous
section to the local invertibility of $PC^1$ functions. The basis for
our considerations is the following consequence of Theorem
\ref{th.4pr96}.

\begin{theorem}\label{th.conpunte}
  Let $f$ be an $\R^k$-valued $PC^1$ function in a neighborhood of
  $x_0\in\R^k$. Assume that
  \begin{enumerate}
  \item All the elements of the geralized Jacobian of $f$ at $x_0$
    have the same sign;
  \item The Bouligand differential of $f$ at $x_0$ is an invertible
    piecewise linear map.
  \end{enumerate}
  Then $f$ is locally invertible at $x_0$.
\end{theorem}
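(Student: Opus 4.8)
The plan is to apply Theorem \ref{th.4pr96} directly to $f$ at the point $x_0$. That theorem characterizes local Lipschitz homeomorphisms in terms of two conditions: a sign condition on the determinants of the matrices in $\J(f,x_0)$, and the degree $\deg(f,U_0,0)$ being $\pm 1$ for small $U_0$. Hypothesis (i) of our statement is precisely the first condition, so the only real work is to verify that the local degree of $f$ at $x_0$ equals $\pm 1$, and this is where the Bouligand derivative and the piecewise-linear theory of the previous section enter.

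First I would reduce the degree computation to a computation about $G:=f'(x_0,\cdot)$. By the boundary dependence property of the Brouwer degree, $\deg(f-f(x_0), U_0, 0)$ on a small ball $U_0=B_r(x_0)$ is determined by the values of $f-f(x_0)$ on $\partial B_r(x_0)$; since by \eqref{defB} we have $f(x_0+h)-f(x_0)=G(h)+o(\|h\|)$ as $h\to 0$, a straight-line homotopy between $h\mapsto f(x_0+h)-f(x_0)$ and $h\mapsto G(h)$ on the sphere $\|h\|=r$ avoids $0$ for $r$ small enough (because $G$, being an invertible positively homogeneous map, is bounded away from $0$ on the unit sphere while the remainder is $o(r)$ uniformly). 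Hence $\deg(f,B_r(x_0),f(x_0))=\deg(G, B_r(0),0)$ for small $r$, and by positive homogeneity this equals $\deg(G)$ in the sense of Section 3.

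Second I would identify $\deg(G)$ with $\pm 1$. By hypothesis (ii), $G=f'(x_0,\cdot)$ is a piecewise linear map which is invertible; moreover, being a continuous selection of the Fréchet derivatives $\nabla f_i(x_0)$ of the selection functions, it is in fact a continuous strongly piecewise linear map in the sense of Definition \ref{def.pwlin} (the regions are polyhedral cones because a $PC^1$ selection structure produces polyhedral essentially-active regions, and the linear pieces are the $\nabla f_i(x_0)$). By Proposition \ref{prop.inv}, an invertible continuous strongly piecewise linear map is nondegenerate, so all $\det(\nabla f_i(x_0))$ for essentially active $i$ share a common nonzero sign, say $\varepsilon=\pm 1$. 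Then Lemma \ref{thm.1} (if $\varepsilon=+1$) or Remark \ref{rem:th1} (if $\varepsilon=-1$) applies: since $G$ is invertible, every point has a singleton preimage, in particular there is a $q\neq 0$ whose preimage is a singleton lying in at most two of the cones (a preimage of a generic ray misses all but at most two cone boundaries), giving $\deg(G)=\varepsilon=\pm 1$.

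Finally, with both conditions of Theorem \ref{th.4pr96} verified — the sign condition from hypothesis (i), and $\deg(f,U_0,f(x_0))=\pm 1$ from the two reductions above — that theorem yields that $f$ is a local Lipschitz homeomorphism at $x_0$, which is the desired local invertibility. The main obstacle I anticipate is the second reduction: one must argue carefully that $f'(x_0,\cdot)$ genuinely falls under Definition \ref{def.pwlin} (polyhedral cone decomposition with the $\nabla f_i(x_0)$ as pieces) so that Proposition \ref{prop.inv}, Lemma \ref{thm.1} and Remark \ref{rem:th1} are applicable; this uses the structure recalled around \eqref{eq.genjac} together with the standard fact that the essentially active regions of a $PC^1$ selection are locally polyhedral near $x_0$. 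The homotopy step, by contrast, is routine given the positive homogeneity and invertibility of $G$.
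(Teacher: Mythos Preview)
Your proposal is correct and follows essentially the same route as the paper: transfer the degree from $f$ to $G=f'(x_0,\cdot)$ via the straight-line homotopy enabled by \eqref{defB} and the positive lower bound $m=\inf_{\|v\|=1}\|G(v)\|>0$, then invoke Theorem~\ref{th.4pr96}. The only difference is in how you compute $\deg(G)$: the paper simply notes that, $G$ being invertible, any regular value has a singleton preimage at which $G$ is differentiable with Jacobian equal to some $\nabla f_i(x_0)$, so \eqref{defgrado} gives $\deg(G)=\mathfrak{s}$ directly; your detour through Proposition~\ref{prop.inv} and Lemma~\ref{thm.1}/Remark~\ref{rem:th1} works too but forces you to verify the polyhedral-cone structure of Definition~\ref{def.pwlin}, which is the very obstacle you flag and which the paper's shortcut sidesteps entirely.
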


\begin{proof}
  It is not difficult to show that since $f'(x_0,\cdot)$ is
  invertible,
  \[
  \deg\big(f'(x_0,\cdot),V,0\big)=\mathfrak{s},
  \]
  where $\mathfrak{s}$ denotes the common sign of the elements of the
  generalized Jacobian of $f$ at $x_0$

  We claim that in a sufficiently small neighborhood $V$ of $x_0$ the
  map $f$ is admissibly homotopic to $f'(x_0,\cdot)$ so that, by
  homotopy invariance, $\deg(f,V,0)=\mathfrak{s}$.  The assertion
  follows from Theorem \ref{th.4pr96}.

  We now prove the claim. Consider the homotopy
  \[
  H(x,\lambda)=f(x-x_0)+\lambda\abs{x-x_0}\varepsilon(x-x_0),\quad
  \lambda\in [0,1],
  \]
  and let $\{i=1,\ldots,k\}$ be the active index set of $f$ at
  $x_0$. Observe that
  \[
  m := \inf\big\{\abs{f(v)} \colon \abs{v}=1\big\}
  =\min_{i=1,\ldots,k}\|df_i\|>0.
  \]
  Thus,
  \[
  \abs{H(x,\lambda)} \geq \big( m -
  \abs{\varepsilon(x-x_0)}\big)\abs{x-x_0}.
  \]
  This shows that in a conveniently small ball centered at $x_0$,
  homotopy $H$ is admissible.
\end{proof}

\begin{example}
  Let $R_1=\{(x,y)\in\R^2:y>x^2\}$, $R_2=\{(x,y)\in\R^2:y<-x^2\}$, and
  $R_3=\R^2\setminus (R_1\cup R_2)$.  Consider the $PC^1$ map
  $f\colon\R^2\to\R^2$ given by
  \[
  f(x,y)=\begin{cases}
    (x,2y-x^2) & \text{for $(x,y)\in R_1$},\\
    (x,2y+x^2) & \text{for $(x,y)\in R_2$},\\
    (x,y) & \text{for $(x,y)\in R_3$}.
  \end{cases}
  \]
  One has that $f'\big((0,0),\cdot)$ is the identity and that the
  generalized Jacobian at the origin consists of matrices which have
  positive determinant. Hence, by Theorem \ref{th.conpunte}, $f$ is
  locally invertible about the origin.
\end{example}

In order to apply Theorem \ref{th.conpunte} above one needs to know
when the linearization of a $PC^1$ map (which is a continuous
piecewise linear map) is invertible. This is what all the previous
section is about. Criteria for the local invertibility of $PC^1$ map
will be deduced from Theorem \ref{th.conpunte} combined with the
results of the previous section.

Let $f$ be an $\R^k$-valued $PC^1$ function in a sufficiently small
ball $B(x_0,\rho)\subseteq\R^k$, and let
$\mathcal{I}_0=\{1,\ldots,n\}$ be the active index set in
$B(x_0,\rho)$. For each $i\in\mathcal{I}_0$ define
\begin{equation}\label{eq:fpez}
  S_i := \big\{ x\in B(x_0,\rho):f(x)=f_i(x)\big\}.
\end{equation}
Let $C_1,\ldots,C_n$ be the tangent cones (in the sense of Bouligand)
at $x_0$ to the sectors $S_1,\ldots,S_n$. Assume that for
$i,j\in\{1,\ldots,n\}$
\[
\ud f_i(x_0)x = \ud f_j(x_0)x\quad\text{for any $x \in C_i \cap C_j$,}
\]
and define
\begin{equation}\label{eq:Flin}
  F(x)= \ud f_i(x_0)x \qquad x\in C_i,\; i=1,\ldots,n
\end{equation}
so that $F$ is a continuous piecewise linear map (compare
\cite{KS94}).

This section is concerned with local invertibility of such maps. We
first consider arbitrary dimension.

\begin{corollary} \label{thm:topo} Let $f$ and $F$ be as in
  \eqref{eq:fpez}-\eqref{eq:Flin}, with $F$ nondegenerate at $0$.
  Assume also that there exists $p\in\R^k$ whose preimage under $F$,
  $F^{-1}(p)$, is a singleton that belongs to at most two of the cones
  $C_i$. Then $f$ is a Lipschitzian homeomorphism in a sufficiently
  small neighborhood of $x_0$.
\end{corollary}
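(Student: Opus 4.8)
The plan is to reduce the statement to the already-established machinery: combine Theorem~\ref{thm:linnpezzi} on the piecewise linear side with Theorem~\ref{th.conpunte} on the $PC^1$ side. First I would recall that $F = f'(x_0,\cdot)$ is, by construction in \eqref{eq:Flin}, precisely the Bouligand derivative of $f$ at $x_0$, and it is a continuous strongly piecewise linear map relative to the cone decomposition $C_1,\ldots,C_n$. Since $F$ is assumed nondegenerate, the determinants $\det\big(\ud f_i(x_0)\big)$ all have the same nonzero sign; together with the hypothesis that $F^{-1}(p)$ is a singleton contained in at most two of the cones $C_i$, Theorem~\ref{thm:linnpezzi} applies and yields that $F$ is a (global) Lipschitz homeomorphism of $\R^k$.

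Next I would verify the two hypotheses of Theorem~\ref{th.conpunte}. For (ii), we have just shown $f'(x_0,\cdot)=F$ is an invertible piecewise linear map. For (i), note that by formula~\eqref{eq.genjac} the generalized Jacobian $\J(f,x_0)$ equals $\{\nabla f_i(x_0): i\in\bar{\mathcal{I}}(x_0)\}$; since $\bar{\mathcal I}(x_0)\subseteq\{1,\ldots,n\}$ and each $\ud f_i(x_0)$ is one of the matrices $L_i$ defining $F$, nondegeneracy of $F$ forces all elements of $\J(f,x_0)$ to have determinant of the same nonzero sign. Hence both assumptions of Theorem~\ref{th.conpunte} are met, and we conclude that $f$ is locally invertible at $x_0$.

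Finally, to upgrade ``locally invertible'' to ``Lipschitzian homeomorphism in a neighborhood of $x_0$'', I would invoke Theorem~\ref{th.4pr96} (as in the proof of Theorem~\ref{th.conpunte}, where the degree of $f$ on a small neighborhood is shown to equal the common sign $\mathfrak s=\pm1$ of the determinants in $\J(f,x_0)$): since $\J(f,x_0)$ consists of matrices whose determinants share a nonzero sign and $\deg(f,U_0,0)=\pm1$ for sufficiently small $U_0$, Theorem~\ref{th.4pr96} gives that $f$ is a local Lipschitz homeomorphism at $x_0$. Alternatively, one may cite the final clause of Theorem~\ref{th.5pr96} directly, after checking that the equality $\J(f,x_0)=\J\big(f'(x_0,\cdot),0\big)$ holds here because the tangent-cone construction makes the active selection functions at $x_0$ coincide with the pieces of $F$; but routing through Theorem~\ref{th.conpunte} is cleaner.

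The only genuine subtlety—hence the step I expect to require the most care—is the bookkeeping identifying $F$ with the Bouligand derivative of $f$ at $x_0$ and checking that the active index set $\bar{\mathcal I}(x_0)$ of the selection functions is exactly the index set labelling the tangent cones $C_i$ with nonempty interior, so that nondegeneracy of $F$ really does control \emph{all} of $\J(f,x_0)$. This is essentially the content of \cite{KS94} invoked around \eqref{eq:Flin}, so I would state it as a short observation and refer there rather than reprove it.
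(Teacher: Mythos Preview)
Your proposal is correct and follows essentially the same route as the paper: invoke Theorem~\ref{thm:linnpezzi} to get that $F$ is invertible, then apply Theorem~\ref{th.conpunte}. The paper's proof is just those two sentences; your additional verification of hypothesis~(i) via \eqref{eq.genjac} and the upgrade to a Lipschitz homeomorphism via Theorem~\ref{th.4pr96} are details the paper leaves implicit (the latter being already baked into the proof of Theorem~\ref{th.conpunte}).
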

  
\begin{proof}
  From Theorem \ref{thm:linnpezzi} it follows that $F$ is
  invertible. The assertion follows from Theorem \ref{th.conpunte}.
\end{proof}

The above Theorem \ref{thm:topo} can be greatly simplified when the
number of cones is $n=2$, in the sense that the assumption on the
existence of the special point $p$ can be dropped altogether.  In
fact, in dimension $k=2$ this is true also for $k=3$ and, when $k=4$,
one can replace it by merely requiring the convexity of the tangent
cones to the sectors.

\begin{corollary}
  Let $f$ and $F$ be as in \eqref{eq:fpez}-\eqref{eq:Flin}, with $F$
  nondegenerate at $0$ and \mbox{$n=2$}. Then $f$ is a Lipschitzian
  homeomorphism in a sufficiently small neighborhood of $x_0$.
\end{corollary}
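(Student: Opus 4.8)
The plan is to treat this exactly as in the proof of Corollary \ref{thm:topo}, but with Proposition \ref{lem:lemalg} playing the role of Theorem \ref{thm:linnpezzi}: for $n=2$ no auxiliary point $p$ is needed, so invertibility of the linearization $F$ comes for free from the two-piece linear algebra. First I would recall that, by construction \eqref{eq:Flin}, $F$ is a continuous strongly piecewise linear map relative to the two Bouligand tangent cones $C_1,C_2$, and that $F$ is the Bouligand derivative $f'(x_0,\cdot)$ (compare \cite{KS94}). By Remark \ref{rem:2p} there are only two possibilities: either $L_1=L_2$, in which case $F$ is linear and, being nondegenerate, trivially invertible; or $C_1$ and $C_2$ are closed half-spaces. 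In the latter case one may write $C_1\cap C_2=\{v\}^\perp$ for a suitable $v\neq 0$ with $C_1\subseteq\{x:\scal{v}{x}\geq 0\}$, and the compatibility condition $\ud f_1(x_0)x=\ud f_2(x_0)x$ for $x\in C_1\cap C_2$ assumed in \eqref{eq:Flin} says precisely that $L_1$ and $L_2$ agree on $\{v\}^\perp$; hence $F=\mathcal{L}_{L_1L_2}$ in the notation of Proposition \ref{lem:lemalg}.

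Next I would verify the two hypotheses of Theorem \ref{th.conpunte}. Since $F$ is nondegenerate, $\det(L_1)\cdot\det(L_2)>0$, so Proposition \ref{lem:lemalg} shows that $F$ is a homeomorphism; as $F=f'(x_0,\cdot)$ is piecewise linear, this is exactly hypothesis (ii). For hypothesis (i), formula \eqref{eq.genjac} identifies the generalized Jacobian with $\J(f,x_0)=\{\nabla f_i(x_0):i\in\bar{\mathcal{I}}(x_0)\}\subseteq\{L_1,L_2\}$, and nondegeneracy of $F$ means exactly that $\sign\det L_1=\sign\det L_2\neq 0$; thus the determinants of all elements of the generalized Jacobian of $f$ at $x_0$ share one and the same nonzero sign. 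Theorem \ref{th.conpunte} then yields that $f$ is locally invertible at $x_0$, and the Lipschitzian character of the inverse comes from the application of Theorem \ref{th.4pr96} inside the proof of Theorem \ref{th.conpunte} (the neighborhood $U_0$ produced there satisfies $\deg(f,U_0,0)=\pm1$, and the determinant condition on $\J(f,x_0)$ has just been checked), exactly as in Corollary \ref{thm:topo}.

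I do not anticipate any genuine obstacle: the statement is a corollary of results already in hand. The only points requiring a little care are the ones indicated above, namely reducing via Remark \ref{rem:2p} to the half-space situation so that $F$ has the precise form $\mathcal{L}_{L_1L_2}$ demanded by Proposition \ref{lem:lemalg}, and reading off hypothesis (i) of Theorem \ref{th.conpunte} from nondegeneracy together with \eqref{eq.genjac}. (As an alternative to the last step one may observe that $f'(x_0,\cdot)$ is differentiable with Jacobian $L_i$ on the interior of each half-space $C_i$, so $\J\big(f'(x_0,\cdot),0\big)=\{L_1,L_2\}=\J(f,x_0)$, and then invoke Theorem \ref{th.5pr96} directly, which also gives that $f$ is a local $PC^1$ homeomorphism.)
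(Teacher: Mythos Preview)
Your proposal is correct and follows exactly the paper's approach: Proposition \ref{lem:lemalg} gives invertibility of $F$, and then Theorem \ref{th.conpunte} yields the assertion. The paper's proof is a two-sentence version of what you wrote; your additional care with Remark \ref{rem:2p}, the explicit verification of hypothesis (i) via \eqref{eq.genjac}, and tracing the Lipschitz property back to Theorem \ref{th.4pr96} are all appropriate elaborations of steps the paper leaves implicit.
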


\begin{proof}
  From Proposition \ref{lem:lemalg} it follows that $F$ is
  invertible. The assertion follows from Theorem \ref{th.conpunte}.
\end{proof}

We finally consider dimension $k=2$ of the ambient space.

\begin{corollary} \label{thm:topoGeo} Let $f$ and $F$ be as in
  \eqref{eq:fpez}-\eqref{eq:Flin}, with $F$ nondegenerate at $0$.  We
  have that if either $n\in\{1,2,3\}$ or $n=4$ and all the cones
  $C_i$'s are convex, then $f$ is a Lipschitzian homeomorphism in a
  sufficiently small neighborhood of $x_0$.
\end{corollary}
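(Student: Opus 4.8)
The plan is to combine Corollary~\ref{thm:topo} with the main result on piecewise linear maps, Theorem~\ref{th.main}. First I would observe that, since $f$ and $F$ are as in \eqref{eq:fpez}--\eqref{eq:Flin}, the map $F$ is precisely the Bouligand derivative $f'(x_0,\cdot)$ (up to the obvious identification coming from the fact that the $C_i$'s are the Bouligand tangent cones of the sectors $S_i$ and the gluing condition $\ud f_i(x_0)x=\ud f_j(x_0)x$ on $C_i\cap C_j$ is exactly the compatibility needed for $F$ to be well defined and continuous). Thus $F$ is a continuous strongly piecewise linear map in the sense of Definition~\ref{def.pwlin}, and by hypothesis it is nondegenerate at $0$, so all the $\det(\ud f_i(x_0))$ share a common nonzero sign.

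Next I would apply Theorem~\ref{th.main} to $F$: in dimension $k=2$, when $n\in\{1,2,3\}$, or when $n=4$ and all the tangent cones $C_i$ are convex, Theorem~\ref{th.main} guarantees that $F$ is invertible (indeed it has a continuous piecewise linear inverse). This is the substantive input; everything else is bookkeeping.

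Finally, with $F=f'(x_0,\cdot)$ invertible and all elements of the generalized Jacobian $\J(f,x_0)=\{\nabla f_i(x_0):i\in\bar{\mathcal{I}}(x_0)\}$ having the same sign (which is the nondegeneracy of $F$), the two hypotheses of Theorem~\ref{th.conpunte} are met, and that theorem yields that $f$ is locally invertible at $x_0$; the Lipschitz character of the local inverse follows as in Corollary~\ref{thm:topo} (alternatively, directly from Theorem~\ref{th.5pr96}, since $\J(f,x_0)=\J(f'(x_0,\cdot),0)$ holds by construction in this setting). I do not expect a genuine obstacle here: the only point requiring a line of care is checking that the hypotheses of Theorem~\ref{th.main}, stated for an abstract continuous strongly piecewise linear map $G$, transfer verbatim to $F$ — i.e.\ that the tangent cones $C_i$ form a decomposition of $\R^2$ into polyhedral cones with common vertex at the origin and nonempty interior — which is the standing assumption on the nondifferentiability structure of $f$ and was already used to define $F$.

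\begin{proof}
  As in the discussion preceding the statement, $F$ is a continuous strongly piecewise linear map on $\R^2$ in the sense of Definition~\ref{def.pwlin}, with associated cones $C_1,\ldots,C_n$ and linear pieces $L_i=\ud f_i(x_0)$; moreover $F$ coincides with the Bouligand derivative $f'(x_0,\cdot)$. By hypothesis $F$ is nondegenerate at $0$, so the determinants $\det\big(\ud f_i(x_0)\big)$ are all nonzero and of the same sign. Since either $n\in\{1,2,3\}$, or $n=4$ and all the cones $C_i$ are convex, Theorem~\ref{th.main} applies to $F$ and shows that $F$ is invertible (with continuous piecewise linear inverse); in particular $f'(x_0,\cdot)$ is bijective. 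Thus both hypotheses of Theorem~\ref{th.conpunte} hold: the elements of the generalized Jacobian of $f$ at $x_0$ share a common sign, and $f'(x_0,\cdot)$ is an invertible piecewise linear map. Therefore $f$ is locally invertible at $x_0$, and the local inverse is Lipschitzian by Theorem~\ref{thm:linnpezzi} and Theorem~\ref{th.conpunte} exactly as in the proof of Corollary~\ref{thm:topo}.
\end{proof}
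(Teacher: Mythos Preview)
Your proposal is correct and follows essentially the same route as the paper: apply Theorem~\ref{th.main} to conclude that the nondegenerate piecewise linear map $F$ is invertible, and then invoke Theorem~\ref{th.conpunte} to deduce that $f$ is a local Lipschitz homeomorphism at $x_0$. The only minor quibble is the closing reference to Theorem~\ref{thm:linnpezzi}, which is not needed here (it plays the role in Corollary~\ref{thm:topo} that Theorem~\ref{th.main} plays in the present proof); the Lipschitz character already comes through Theorem~\ref{th.conpunte} via Theorem~\ref{th.4pr96}.
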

  
\begin{proof}
  Since $F$ is nondegenerate then it is invertible by Theorem
  \ref{th.main}. Theorem \ref{th.conpunte}, yields the assertion.
\end{proof}


\end{document}